\definecolor{newcolor}{rgb}{0.1,1,0}
        \newcommand{\be}{\begin{equation}}
        \newcommand{\ee}{\end{equation}}
        \newcommand{\ba}{\begin{eqnarray}}
        \newcommand{\ea}{\end{eqnarray}}
        \newcommand{\ban}{\begin{eqnarray*}}
        	\newcommand{\ean}{\end{eqnarray*}}
        \def \dd {\partial}
\def \dd {\partial}
\def \eps {\varepsilon}
\DeclareMathOperator{\Ric}{Ric}
\DeclareMathOperator{\Hess}{Hess}
\DeclareMathOperator{\diam}{diam}
\DeclareMathOperator{\sn}{sn}
\DeclareMathOperator{\tn}{tn}
\DeclareMathOperator{\cs}{cs}
\DeclareMathOperator{\SecFun}{II}
\title{Fundamental Gap Estimate for Convex Domains on Sphere --- the case $n=2$}
\author{Xianzhe Dai}
\address[Xianzhe Dai]{Department of Mathematics, ECNU, Shanghai and UCSB, Santa Barbara CA 93106 }
\email{\href{mailto:shoseto@ucsb.edu}{dai@math.ucsb.edu}}
\thanks{Partially supported by NSF DMS and NSFC}
\author{Shoo Seto}
\address[Shoo Seto]{Department of Mathematics\\
	University of California\\
	Santa Barbara, CA 93106}
\email{\href{mailto:shoseto@ucsb.edu}{shoseto@ucsb.edu}}
\thanks{Partially supported by Simons Foundation}
\author{Guofang Wei}
\address[Guofang Wei]{Department of Mathematics\\
	University of California\\
	Santa Barbara, CA 93106}
\email{\href{mailto:wei@math.ucsb.edu}{wei@math.ucsb.edu}}
\thanks{Partially supported by NSF DMS 1506393}
\date{}
\begin{document}
	
\theoremstyle{definition} 
\newtheorem{theorem}{Theorem}[section]
\newtheorem{definition}[theorem]{Definition}
\newtheorem{conjecture}{Conjecture}[section]
\newtheorem{example}{Example}[section]
\newtheorem{lemma}[theorem]{Lemma}
\newtheorem{remark}[theorem]{Remark}
\newtheorem{question}{Question}[section]
\newtheorem{proposition}[theorem]{Proposition}
\newtheorem{corollary}[theorem]{Corollary}
\newtheorem*{notation}{Notation}

\numberwithin{equation}{section}
	\begin{abstract}
		In \cite{seto-wang-wei, he-wei} it is shown that the difference of the first two eigenvalues of the Laplacian with Dirichlet boundary condition on convex domain with diameter $D$ of sphere $\mathbb S^n$  is $\ge 3 \frac{\pi^2}{D^2}$  when $n \ge 3$. We prove the same result when $n=2$. In fact our proof works for all dimension.  We also give an asymptotic expansion of the first and second Dirichlet eigenvalues of the model in \cite{seto-wang-wei}.
	\end{abstract}
\maketitle
\section{Introduction}
Let $M$ be an $n$-dimensional Riemannian manifold and $\Omega \subset M$ a bounded convex domain with diameter $D$.  The spectrum of the Laplacian on $\Omega$ with respect to the Dirichlet or the Neumann boundary condition is nonnegative and discrete.  Furthermore, the first Dirichlet eigenvalue, $\lambda_1$, is positive and simple so that we can define the \textit{fundamental gap} as
\begin{equation*}
\Gamma(\Omega) := \lambda_2 - \lambda_1 >0.
\end{equation*}
There is a rich history towards estimating a lower bound for the fundamental gap depending on geometric data.  In particular, for convex domains in $\mathbb R^n$, the fundamental gap conjecture states that the fundamental gap is  $\ge \frac{3\pi^2}{D^2}$, where $D$ is the diameter of the convex domain.   This  was proven by B. Andrews and J. Clutterbuck in their celebrated work \cite{andrews-clutterbuck}.  When $M=\mathbb S^n$, \cite{seto-wang-wei} proved the same lower bound for dimensions $n\geq 3$ and diameter $D < \frac{\pi}{2}$.  The diameter restriction was removed by C. He and the third author in \cite{he-wei} by using parabolic methods and a delicate construction of supersolutions to a  one-dimensional nonlinear parabolic model.  In fact, in the work of \cite{seto-wang-wei}, the estimate holds for $\mathbb{M}^n_K$, the simply connected spaces with constant curvature $K$, with $K\geq 0$.  In this paper, by using a different model,  we show that the fundamental gap estimate for convex domain in $\mathbb S^n$ also holds for $n=2$. In fact the proof works for all $n$ and $K\ge 0$. 
	\begin{theorem} \label{main1}
	Let $\Omega \subset \mathbb M_K^n (K \ge 0)$ be a strictly convex domain with diameter $D$, $\lambda_i$ $(i=1,2)$ be the first two eigenvalues of the Laplacian on $\Omega$ with Dirichlet boundary condition. Then
	\begin{equation} \label{gap-est}
		\lambda_2-\lambda_1 \ge 3 \frac{\pi^2}{D^2}.
	\end{equation}
\end{theorem}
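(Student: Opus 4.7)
My plan is to follow the Andrews--Clutterbuck modulus-of-continuity strategy, as adapted to the sphere in \cite{seto-wang-wei, he-wei}, but with a new one-dimensional comparison model whose gradient-comparison closes without relying on the dimensional factor $(n-1)$ coming from the Ricci curvature, so that $n=2$ (and in fact all $n\ge 2$) is handled uniformly. Writing $\phi_1,\phi_2$ for the first two Dirichlet eigenfunctions and $v=\phi_2/\phi_1$, the quotient satisfies
\be
\Delta v + 2\,\nabla \log\phi_1 \cdot \nabla v = -(\lambda_2-\lambda_1)\,v,
\ee
and the goal is to bound $\osc_{\oo\Omega} v \le 2\tilde v(D/2)$, where $\tilde v$ is an odd solution to a one-dimensional model on $[-D/2,D/2]$ whose first two eigenvalues differ by at least $3\pi^2/D^2$.

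First, I would set up the model: on $[-D/2, D/2]$ consider a Sturm--Liouville operator
\be
L\tilde\phi = -\tilde\phi'' - T(s)\,\tilde\phi',
\ee
with a weight $T$ chosen so that (i) its first eigenfunction $\tilde\phi_1$ is even, positive, and satisfies the sharp log-concavity estimate $-(\log\tilde\phi_1)''\ge \pi^2/D^2$, and (ii) the remainder $-T'(s)+K$ arising from the second variation of distance in the two-point maximum principle is nonnegative. Unlike the weight used in \cite{seto-wang-wei}, this $T$ should not carry an explicit $(n-1)$ factor, which is what opens the dimension-two case. The log-concavity of $\phi_1$ itself, in the form $\Hess\log\phi_1\bigl(\gamma',\gamma'\bigr) \le (\log\tilde\phi_1)''(d/2)$ along a minimizing geodesic $\gamma$ between two interior points, I would derive following the parabolic comparison of \cite{he-wei}, using $K\ge 0$ in the interior and strict convexity of $\partial\Omega$ at the boundary.

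With these inputs the two-point maximum principle applied to
\be
Z(x,y) = v(y) - v(x) - 2\tilde v\!\Bigl(\tfrac{1}{2} d(x,y)\Bigr)
\ee
on $\oo\Omega\times\oo\Omega$ rules out a positive interior maximum by combining the equations for $v$ and $\tilde v$ with the log-concavity comparison and the curvature remainder $K\ge 0$; the Dirichlet condition and strict convexity exclude a boundary maximum. This forces $\osc v \le 2\tilde v(D/2)$ and hence $\lambda_2-\lambda_1 \ge \tilde\lambda_2 - \tilde\lambda_1$. The final piece is to verify $\tilde\lambda_2 - \tilde\lambda_1 \ge 3\pi^2/D^2$ for the model, using the sharp log-concavity of $\tilde\phi_1$ together with a Barta-type comparison.

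The main obstacle I expect is constructing the new weight $T$ so that conditions (i) and (ii) hold simultaneously for every $D\in(0,\pi)$. In the \cite{seto-wang-wei} model an $(n-1)\tan$-type weight exactly absorbed the Ricci contribution of $\mathbb{S}^n$, but this cancellation degenerates when $n=2$; replacing $T$ by a dimension-independent weight forces $\tilde\phi_1$ to supply more of the log-concavity itself, and verifying $-(\log\tilde\phi_1)''\ge\pi^2/D^2$ uniformly for $D$ up to $\pi$ is the delicate analytic step. The asymptotic expansion of the model eigenvalues announced in the abstract should serve as the key device for closing this estimate in the limiting regime, with a monotonicity/continuity argument in $D$ covering the intermediate range.
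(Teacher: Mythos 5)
Your proposal reproduces the correct overall architecture of the paper's argument: the quotient $v=\phi_2/\phi_1$ and its elliptic equation, a two-point maximum principle against an odd one-dimensional profile, a log-concavity estimate for $\phi_1$ obtained by parabolic comparison as in \cite{he-wei}, and the reduction $\lambda_2-\lambda_1\ge\tilde\lambda_2-\tilde\lambda_1$. The genuine gap is in the one step you yourself flag as the obstacle: you never construct the weight $T$, and the device you propose for closing the estimate would fail. The asymptotic expansion of the model eigenvalues (Proposition \ref{eigen-asym}) is perturbative in $KD^2$ and, far from rescuing the $n=2$ case, it quantifies the failure: for the sphere model one gets $\bar\lambda_2-\bar\lambda_1 = 3\frac{\pi^2}{D^2}+\frac{3(n-1)(n-3)}{32}\frac{D^2K^2}{\pi^2}+O(K^3)$, which is strictly \emph{below} $3\frac{\pi^2}{D^2}$ when $n=2$ since $(n-1)(n-3)<0$ (consistent with \cite{ashbaughbenguria89}). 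No monotonicity-in-$D$ or continuity argument can upgrade a perturbative expansion with the wrong sign into the required inequality, and for an unspecified dimension-independent $T$ you have no expansion at all. Moreover, requiring simultaneously that $-(\log\tilde\phi_1)''\ge\pi^2/D^2$ \emph{and} that the weight absorb the second-variation term is overdetermined; this is precisely the tension you cannot resolve by adjusting $T$ alone.

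The paper's resolution is different and simpler than what you sketch: it takes $T\equiv 0$, i.e.\ the flat model $\tilde\phi_1=\cos(\tfrac{\pi s}{D})$, $\tilde\phi_2=\sin(\tfrac{2\pi s}{D})$, for which $\tilde\lambda_2-\tilde\lambda_1=3\frac{\pi^2}{D^2}$ exactly and the log-concavity $-(\log\tilde\phi_1)''\ge\frac{\pi^2}{D^2}$ is automatic for every $D$ — no Barta argument, no limiting regime. The curvature is not absorbed into the weight but into a \emph{modified} modulus of concavity: one proves
\be
\langle \nabla \log \phi_1 (y), \gamma'(\tfrac{d}{2}) \rangle - \langle \nabla\log \phi_1(x),\gamma'(-\tfrac{d}{2})\rangle \leq -2 \tfrac{\pi}{D} \tan \left( \tfrac{\pi d}{2D} \right) + (n-1)\tn_K (\tfrac{d}{2}),
\ee
and in the two-point maximum principle the additive term $(n-1)\tn_K(\tfrac{d_0}{2})$ exactly cancels the contribution $-(n-1)\tn_K(\tfrac{d_0}{2})\,\bar w'(\tfrac{d_0}{2})$ of the two-point Laplacian comparison (second variation of distance with $\Ric\ge(n-1)K$); since $K\ge0$ this extra term is harmless. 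Two ingredients you omit are essential for this to work: (a) the eigenvalue comparison $\lambda_1(\Omega)\ge\frac{\pi^2}{D^2}$ (Lemma \ref{eigvalcomp}), without which the Euclidean stationary profile is not an admissible supersolution of the preservation inequality \eqref{psiflow}; and (b) the construction of regular initial supersolutions via the perturbed Riccati/Robin problems \eqref{lefteqn}--\eqref{righteqn} before flowing to the stationary profile. If you replace your ``new weight $T$'' program by this modified-modulus mechanism, your outline becomes the paper's proof.
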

The key to proving this is to show the following log-concavity of the first eigenfunction.
\begin{theorem}\label{log-concavity}
		Given $\Omega \subset \mathbb M^n_K$ a bounded strictly convex domain with diameter $D$ and $K\geq 0$, let
	$\phi_1>0$ be a first eigenfunction of the Laplacian on $\Omega$.  Then $\forall x, y  \in \Omega$, with $x \not= y$, and $\gamma(t), \  t\in [ - \tfrac{d}{2}, \tfrac{d}{2}]$ the unique unit-speed length minimizing geodesic connecting $x$ to $y$, 
	\be
	\langle \nabla \log \phi_1 (y), \gamma'(\tfrac{d}{2}) \rangle - \langle \nabla\log \phi_1(x),\gamma'(-\tfrac{d}{2})\rangle  \leq -2 \tfrac{\pi}{D} \tan \left( \tfrac{\pi d}{2D} \right)  + (n-1)\tn_K (\tfrac{d}{2})		
	\ee
	holds (see (\ref{tn_K}) for the definition of $\tn_K$), which gives
	\[
	\Hess \,(\log \phi_1) \le - \left(\frac{\pi^2}{D^2} - \frac{n-1}{2} K\right) \,\mbox{id}.\]
\end{theorem}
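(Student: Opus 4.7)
The plan is to adapt the two-point parabolic maximum principle of Andrews--Clutterbuck to the positively-curved setting, working with a dimension-independent one-dimensional profile $w(s) = -\tfrac{\pi}{D}\tan(\tfrac{\pi s}{D}) = (\log\cos(\tfrac{\pi s}{D}))'$ rather than the dimension-dependent model used in \cite{seto-wang-wei}. Let $u(x,t)$ solve the heat equation $u_t = \Delta u$ on $\Omega$ with Dirichlet boundary condition and a smooth positive, sufficiently log-concave, initial datum, and set $v := \log u$, so that
\[
v_t = \Delta v + |\nabla v|^2.
\]
Since $e^{\lambda_1 t} u(\cdot, t)$ converges (up to scaling) to $\phi_1$ as $t \to \infty$, it suffices to establish the stated gradient inequality uniformly for $v(\cdot, t)$ along the flow and pass to the limit.

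\textbf{Two-point function and maximum principle.}
For $(x,y,t) \in \Omega \times \Omega \times [0,\infty)$ with $x \neq y$, let $\gamma$ be the unit-speed minimizing geodesic from $x$ to $y$ and $d = d(x,y)$. Define
\[
Z(x,y,t) := \langle \nabla v(y,t), \gamma'(\tfrac{d}{2}) \rangle - \langle \nabla v(x,t), \gamma'(-\tfrac{d}{2}) \rangle + 2\tfrac{\pi}{D}\tan\!\left(\tfrac{\pi d}{2D}\right) - (n-1)\tn_K(\tfrac{d}{2}).
\]
The goal is to show $Z \leq 0$ on the whole region; sending $t \to \infty$ then yields the inequality for $\log \phi_1$. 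Strict convexity of $\Omega$ together with Hopf's lemma makes $|\nabla v|$ blow up inward near $\partial \Omega$, forcing $Z$ to be negative whenever $x$ or $y$ is close to the boundary. If $Z$ ever becomes positive, at the first such time $\bar t > 0$ there is an interior maximizer $(\bar x, \bar y)$ with $Z(\bar x, \bar y, \bar t) = 0$, $\partial_t Z \geq 0$, $\nabla_{(x,y)} Z = 0$, and $\Hess_{(x,y)} Z \leq 0$ on admissible two-point variations.

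\textbf{Second variation, Hessian bound, and main obstacle.}
The first-order conditions together with the first variation of arclength force $\nabla v(\bar x, \bar t)$ parallel to $\gamma'(-\tfrac{d}{2})$ and $\nabla v(\bar y, \bar t)$ parallel to $\gamma'(\tfrac{d}{2})$. The central step is to feed the $n-1$ parallel unit vector fields $E_1, \ldots, E_{n-1}$ along $\gamma$ normal to it as test variations into the second-variation inequality: using the explicit normal Jacobi fields on $\mathbb M^n_K$ (proportional to $\sn_K$) together with the Hessian comparison for the distance function, the sum over $i$ of the corresponding second variations produces exactly the curvature contribution $(n-1)\tn_K(\tfrac{d}{2})$. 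Combining this with $v_t = \Delta v + |\nabla v|^2$ at $\bar x, \bar y$ and with the ODE $w' + w^2 = -\tfrac{\pi^2}{D^2}$ satisfied by the profile $w$, one obtains a strict inequality at $(\bar x, \bar y, \bar t)$ that contradicts the maximum property, so $Z \leq 0$. The Hessian bound is then recovered by fixing $x \in \Omega$ and a unit $\xi \in T_x \Omega$, setting $y = \exp_x(d\xi)$, and Taylor-expanding: the LHS of the two-point inequality is $d\,\Hess(\log\phi_1)(\xi, \xi) + O(d^2)$ and the RHS is $d\bigl(-\tfrac{\pi^2}{D^2} + \tfrac{(n-1)K}{2}\bigr) + O(d^2)$, so dividing by $d$ and sending $d \to 0$ produces the pointwise inequality. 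The main obstacle is the normal second-variation step: extracting the precise coefficient $(n-1)\tn_K(\tfrac{d}{2})$ requires balancing the Hessian of the distance function on $\mathbb M^n_K$ (expressed through $\sn_K, \cs_K, \tn_K$) against the reaction term $|\nabla v|^2$ via the Bochner identity, which is delicate in positive curvature where the normal Jacobi fields do not decay monotonically as in the Euclidean case, and this is precisely where the choice of the dimension-independent profile $w$ gives the clean ODE needed to close the estimate in all $n$, including $n=2$.
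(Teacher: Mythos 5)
Your overall strategy---a two-point maximum principle for
\[
Z=\langle \nabla \log u(y,t),\gamma'(\tfrac d2)\rangle-\langle \nabla \log u(x,t),\gamma'(-\tfrac d2)\rangle+2\tfrac{\pi}{D}\tan(\tfrac{\pi d}{2D})-(n-1)\tn_K(\tfrac d2)
\]
with the dimension-independent Euclidean profile---is indeed the right framework, and it is essentially what Theorem \ref{log-con-preserve2} (quoted from \cite{seto-wang-wei}) already packages. But your argument has a genuine gap at the initial time. The profile $\psi_0(s)=-\tfrac{\pi}{D}\tan(\tfrac{\pi s}{D})$ is singular: it tends to $-\infty$ as $s\to D/2$, i.e.\ as $d(x,y)\to\diam\Omega$. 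No ``sufficiently log-concave smooth initial datum'' will satisfy $Z\le 0$ at $t=0$ for such a modulus without a careful construction, and your ``first time $\bar t>0$ at which $Z$ becomes positive'' argument silently assumes $Z\le 0$ at $t=0$. This is exactly the point the paper spends all of Section 3 on: one must replace the singular stationary profile by a family of regular stationary profiles $(\log\phi_{0,1/k})'$ (constructed via the Riccati equation, the Pr\"ufer transformation and a Robin eigenvalue problem with boundary slope $-k$), build explicit supersolutions $\psi^+_{k,s}=\min\{\psi^L_{c_k-s},\psi^R_{k,c_k+s}\}$ that dominate the initial log-gradient of the eigenfunction (using Lemmas \ref{Hessianest} and \ref{boundary-cover-lemma} to control the diagonal and the boundary of $\hat\Omega$), flow these profiles under the one-dimensional parabolic equation, show via the maximum principle of \cite{he-wei} that they converge to $(\log\phi_{0,1/k})'$ as $t\to\infty$, and only then let $k\to\infty$ to recover $-\tfrac{\pi}{D}\tan$. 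Your proposal collapses this entire approximation scheme into one unjustified sentence.

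A second, quieter omission: for the static Euclidean profile to be a supersolution of the one-dimensional flow \eqref{psiflow} in curvature $K>0$, the differential inequality reduces (since $\psi_0'+\psi_0^2=-\tfrac{\pi^2}{D^2}$ and $\psi_0''+2\psi_0\psi_0'=0$) to $0\ge -2\tn_K(s)(\lambda_1-\tfrac{\pi^2}{D^2})$, which is precisely the eigenvalue comparison $\lambda_1\ge \tfrac{\pi^2}{D^2}$ of Lemma \ref{eigvalcomp}. This inequality is where the sign closes in the second-variation computation, and it is specific to the actual first eigenfunction (equivalently to $u=e^{-\lambda_1 t}\phi_1$); your formulation with a general solution of the heat equation and general initial data does not produce the constant $\lambda_1$ in the right place, so the ``strict inequality contradicting the maximum property'' you assert is not established. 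The final passage from the two-point inequality to the Hessian bound by Taylor expansion along $y=\exp_x(d\xi)$ and $d\to 0$ is correct and matches the paper.
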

When $K=0$, this recovers the log-concavity proved in \cite{andrews-clutterbuck}. When $n=3$, this log-concavity is the same log-concavity as in \cite[Theorem 1.5]{seto-wang-wei}, referred as sphere model. In general there is no direct comparison.  But when $KD^2$ is small,  this log-concavity is worse than the sphere model for $n> 3$ but better than the sphere model for $n = 2$, see Remark~\ref{two-mod}  for details.	

We also give an asymptotic expansion of the first and second Dirichlet eigenvalues of the sphere model in \cite{seto-wang-wei}. Recall $\bar{\lambda}_1 (n,D,K), \ \bar{\lambda}_2 (n,D,K)$ are the first and second Dirichlet eigenvalues of
\begin{equation}\label{schrodingernormal-1}
\varphi''(s) - \tfrac{(n-1)K}{4} \left( \tfrac{n-3}{\cs_K^2(s)}  - (n-1) \right)  \varphi=  - \lambda \, \varphi
\end{equation}
on $[-\tfrac{D}{2},  \tfrac{D}{2}]$ (see (\ref{sn_k-cs_K}) for definition of $\cs_K$).
  When $n=1,3$ or $K=0$, one can find the eigenvalues and eigenfunctions explicitly and the gap  $ \bar{\lambda}_2 (n,D,K) -\bar{\lambda}_1 (n,D,K) = 3 \frac{ \pi^2}{D^2}$.  In general one can not find the eigenvalues explicitly. 
When $K>0$,  as $(\cs_K^{-2}(s))'' \ge 0$, $ \bar{\lambda}_2 (n,D,K) -\bar{\lambda}_1 (n,D,K) > 3 \frac{ \pi^2}{D^2}$ when  $n > 3$,  but $<3 \frac{ \pi^2}{D^2}$ when  $n =2$ \cite{ashbaughbenguria89}.

\begin{proposition} \label{eigen-asym}
For $K\in \mathbb{R}$,
\begin{equation*}
\bar\lambda_1 = \frac{\pi^2}{D^2}-\frac{(n-1)}{2}K+\frac{(n-1)(n-3)}{48\pi^2}(\pi^2 - 6)D^2K^2+\frac{(n-1)(n-3)}{480\pi^4}D^4K^3(\pi^4-20\pi^2+120) +O(K^4).
\end{equation*}
and
\begin{equation*}
\bar\lambda_2 = \frac{4\pi^2}{D^2} -\frac{(n-1)}{2}K+\frac{(n-1)(n-3)}{48\pi^2}\left(\pi^2-\frac{3}{2}\right)D^2K^2+\frac{(n-1)(n-3)}{480\pi^4} D^4K^3\left(\pi^4-5\pi^2+\frac{15}{2}\right) +O(K^4).
\end{equation*}
Hence 
\[
	\bar{\lambda}_2 (n,D,K) -\bar{\lambda}_1 (n,D,K)  =  3 \frac{ \pi^2}{D^2} + \frac{3(n-1)(n-3)}{32}  \frac{D^2K^2}{\pi^2} + \frac{(n-1)(n-3)}{480\pi^4}D^4K^3\left(15\pi^2-\frac{225}{2}\right) +O(K^4)
\]
and  for $n \ge 3$, $K$ small,\begin{equation}
\bar{\lambda}_2 (n,D,K) -\bar{\lambda}_1 (n,D,K)  \ge  3 \frac{ \pi^2}{D^2} + \frac{3(n-1)(n-3)}{32}  \frac{D^2K^2}{\pi^2}. \label{gap-asym}	\end{equation}
\end{proposition}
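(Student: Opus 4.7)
The plan is to apply Rayleigh--Schr\"odinger perturbation theory in $K$ to the one-dimensional eigenvalue problem \eqref{schrodingernormal-1} on $[-\tfrac{D}{2}, \tfrac{D}{2}]$ with Dirichlet boundary conditions. The first move is to absorb the $s$-independent part of the potential into a shift of the eigenvalue: from
\[
\frac{n-3}{\cs_K^2(s)} - (n-1) = (n-3)\left(\frac{1}{\cs_K^2(s)} - 1\right) - 2,
\]
setting $\tilde\lambda := \lambda + \tfrac{(n-1)K}{2}$ converts \eqref{schrodingernormal-1} into
\[
-\varphi''(s) + \frac{(n-1)(n-3)K}{4}\left(\frac{1}{\cs_K^2(s)} - 1\right)\varphi(s) = \tilde\lambda\,\varphi(s),
\]
which accounts for the $-\tfrac{(n-1)K}{2}$ summand in the stated expansions of $\bar\lambda_j$.

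Next I would Taylor-expand in $K$. Since $\cs_K(s) = \cos(\sqrt{K}\,s)$ (respectively $\cosh(\sqrt{-K}\,s)$ for $K<0$) and $\sec^2 u = 1 + u^2 + \tfrac{2}{3}u^4 + \tfrac{17}{45}u^6 + O(u^8)$,
\[
\frac{1}{\cs_K^2(s)} - 1 = K s^2 + \tfrac{2}{3}K^2 s^4 + \tfrac{17}{45}K^3 s^6 + O(K^4),
\]
so the full perturbation has the form $K^2 P_2(s) + K^3 P_3(s) + O(K^4)$ with $P_2 = \tfrac{(n-1)(n-3)}{4}s^2$ and $P_3 = \tfrac{(n-1)(n-3)}{6}s^4$. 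Because this potential depends real-analytically on $K$ for $|K|D^2$ small and each unperturbed eigenvalue $\lambda_j^{(0)} = j^2\pi^2/D^2$ is simple, Kato's analytic perturbation theory provides convergent expansions $\tilde\lambda_j = \lambda_j^{(0)} + \sum_{k\ge 1}K^k a_k^{(j)}$ and $\varphi_j = \sum_{k\ge 0}K^k \varphi_j^{(k)}$ for $K$ near $0$.

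Matching powers of $K$ in the eigenvalue equation and projecting onto $\varphi_j^{(0)}$ pins down the coefficients. Since the perturbation is $O(K^2)$, the order-$K^1$ equation forces $a_1^{(j)} = 0$ and allows the normalization $\varphi_j^{(1)} = 0$. The order-$K^2$ and order-$K^3$ identities then collapse to the diagonal Rayleigh--Schr\"odinger-type formulas
\[
a_2^{(j)} = \langle \varphi_j^{(0)}, P_2\,\varphi_j^{(0)}\rangle, \qquad a_3^{(j)} = \langle \varphi_j^{(0)}, P_3\,\varphi_j^{(0)}\rangle,
\]
with no contribution from $\varphi_j^{(2)}$ because such terms are multiplied by $a_1^{(j)} = 0$. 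Taking $\varphi_1^{(0)}(s) = \sqrt{2/D}\cos(\pi s/D)$ and $\varphi_2^{(0)}(s) = \sqrt{2/D}\sin(2\pi s/D)$, repeated integration by parts yields the explicit moments $\langle s^2\rangle_j$ and $\langle s^4\rangle_j$ reproducing the $\pi$-polynomials in the statement.

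Finally, restoring the shift $-\tfrac{(n-1)K}{2}$ yields $\bar\lambda_1$ and $\bar\lambda_2$; subtracting gives the gap expansion. For $n\ge 3$ the prefactor $(n-1)(n-3)\ge 0$ makes the $K^2$ gap correction non-negative, and it dominates the $K^3$ and $O(K^4)$ remainders for $K$ small, giving \eqref{gap-asym}. The only non-routine point is recognizing that the $K^3$ correction collapses to a diagonal matrix element rather than involving off-diagonal resolvent sums; this is structural, stemming from the perturbation's $O(K^2)$ leading order, and bypasses the usual second-order Rayleigh--Schr\"odinger machinery. The remainder is a straightforward but careful integration-by-parts computation.
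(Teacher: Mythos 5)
Your proposal is correct and takes essentially the same route as the paper: both perturb the Schr\"odinger normal form about the explicitly solvable $K=0$ problem, absorb the constant part of the potential into an eigenvalue shift, expand $\sec^2(\sqrt{K}s)$ in powers of $K$, and extract each coefficient by projecting the order-$K^m$ equation onto the unperturbed eigenfunction (your observation that the $K^3$ coefficient collapses to a diagonal matrix element is precisely why the paper's $\lambda_{1,K^3}$ needs no contribution from $y_{1,2}$, the genuine second-order Rayleigh--Schr\"odinger terms first appearing at order $K^4$ as in \S 5.1). One small caveat on the last step: to obtain \eqref{gap-asym} you should note that the $K^3$ gap coefficient is itself nonnegative for $n\ge 3$ (since $15\pi^2>\tfrac{225}{2}$) and, for $n>3$ and $K>0$ small, dominates the $O(K^4)$ remainder --- arguing instead that the $K^2$ term dominates the higher orders would only give the inequality with a degraded constant.
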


\begin{remark}
The estimate (\ref{gap-asym}) gives an explicit lower bound which is bigger than $ 3 \frac{ \pi^2}{D^2}$ when $KD^2$ is small and $n \ge 3$. On the other hand the estimate seems to be not true when $KD^2$ is big. In fact beginning with the $K^5$ order, the coefficient changes sign for some $n >3$, instead of at $n=3$, see Section~\ref{high-order}.
\end{remark}

\subsection*{Outline of the paper}
In \S 2 we establish the notations, definitions and preliminary lemmas which we will use.  In \S 3, we prove the key result on the log-concavity of the first eigenfunction by comparing with the one-dimensional model.   In \S 4, we apply the log-concavity result to compare the gap of the first and second eigenvalues between convex domains of spheres and the one-dimensional model.  In \S 5, we compute the asymptotics of the first and second eigenvalues of the one-dimensional model used in \cite{seto-wang-wei}.  The analysis of the one-dimensional model in \S 5 is interesting on its own and can be read independently.  

\subsection*{Acknowledgement} We would like to thank Chenxu He for very careful reading of the first version and very  helpful comments and conversations. 

\section{Preliminaries}
We use the following notation
\begin{equation}  \label{sn_k-cs_K}
\sn_K(s) =
\begin{cases}
\frac{1}{\sqrt{K}}\sin(\sqrt{K}s), & K > 0 \\
s, & K=0\\
\frac{1}{\sqrt{-K}}\sinh(\sqrt{-K}s) & K < 0,
\end{cases}
\quad \text{ and }\quad
\cs_K(s) =
\begin{cases}
\cos(\sqrt{K}s), & K > 0 \\
1, & K=0\\
\cosh(\sqrt{-K}s), & K<0,
\end{cases}
\end{equation}
and
\begin{equation}
\tn_K(s) =
\begin{cases}
\sqrt{K}\tan(\sqrt{K}s), & K > 0 \\
0, & K=0 \\
-\sqrt{-K}\tanh(\sqrt{-K}s) & K <0.
\end{cases}  \label{tn_K}
\end{equation}
\begin{definition}
Given a semi-convex function $u$ on a domain $\Omega$, a function $\psi:[0,+\infty) \to \mathbb{R}$ is called a \textit{modified modulus of concavity} for $u$ if for every $x\neq y$ in $\Omega$,
\begin{equation*}
\langle \nabla u(y),\gamma'(\tfrac{d}{2})\rangle - \langle \nabla u(x),\gamma'(-\tfrac{d}{2})\rangle \leq 2 \psi(\tfrac{d}{2}) + (n-1)\tn_K(\tfrac{d}{2}),
\end{equation*}
where $\gamma$ is the unit-speed length minimizing geodesic with $\gamma(-\tfrac{d}{2}) = x$ and $\gamma(\tfrac{d}{2}) = y$, $d=d(x,y)$.
\end{definition}
The main tool we will use is the following preservation of the modified modulus of concavity under the one-dimensional flow.
\begin{theorem}[Theorem 3.6 \cite{seto-wang-wei}] \label{log-con-preserve2}
Let $\Omega\subset \mathbb{M}_K^n$ be a uniformly convex domain with diameter $D$, where $K\geq 0$.  Let $\phi_1$ be a positive first eigenfunction of the Laplacian on $\Omega$ with Dirichlet boundary condition associated to the eigenvalue $\lambda_1$, and $u:\Omega \times \mathbb{R}_+\to \mathbb{R}$ be given by $u(x,t) = e^{-\lambda_1t}\phi_1(x)$. Suppose $\psi_0:[0,D/2] \to \mathbb{R}$ satisfies
\begin{equation*}
\langle \nabla \log u(y,0), \gamma'(\tfrac{d}{2}) \rangle - \langle \nabla\log u(x,0),\gamma'(-\tfrac{d}{2})\rangle \leq 2\psi_0|_{s=\frac{d}{2}} + (n-1)\tn_K(\tfrac{d}{2}).
\end{equation*}
Let $\psi \in C^0([0,D/2])\times \mathbb{R}_+) \cap C^\infty([0,D/2]\times (0,\infty))$ be a solution of
\begin{equation}\label{psiflow}
\begin{cases}
\frac{\dd\psi}{\dd t} \geq \psi''(s,t) + 2\psi\psi'(s,t) -2\tn_K(s)(\psi'(s,t) + \psi^2(s,t) + \lambda_1) & \text{ on } [0,D/2]\times \mathbb{R}_+ \\
\psi(\cdot,0) = \psi_0(\cdot) \\
\psi(0,t) = 0 \\
\psi(s,t) \leq 0.
\end{cases}
\end{equation}
Then
\begin{equation*}
\langle \nabla \log u(y,t), \gamma'(\tfrac{d}{2}) \rangle - \langle \nabla\log u(x,t),\gamma'(-\tfrac{d}{2})\rangle \leq 2\psi(s,t)|_{s=\frac{d}{2}} + (n-1)\tn_K(\tfrac{d}{2})
\end{equation*}
for all $t \geq 0$ and $D \leq \frac{\pi}{\sqrt{K}}$ if $K>0$.
\end{theorem}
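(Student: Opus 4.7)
The plan is to argue by the parabolic maximum principle on an auxiliary two-point function. Let $v(x,t) := \log u(x,t) = \log \phi_1(x) - \lambda_1 t$. Since $u$ solves the heat equation $u_t = \Delta u$, the gradient $\nabla v$ is time-independent, and the elliptic identity $\Delta v + |\nabla v|^2 = -\lambda_1$ follows from $\Delta \phi_1 = -\lambda_1 \phi_1$. For distinct $x,y \in \Omega$, let $\gamma$ denote the unique unit-speed minimizing geodesic from $x$ to $y$, set $s := d(x,y)/2$, and define
\[
Z(x,y,t) := \langle \nabla v(y,t), \gamma'(s)\rangle - \langle \nabla v(x,t), \gamma'(-s)\rangle - 2\psi(s,t) - (n-1)\tn_K(s).
\]
The hypothesis gives $Z(\cdot,\cdot,0) \leq 0$, and since only $\psi$ carries time dependence, $\partial_t Z = -2\partial_t \psi(s,t)$. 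The aim is $Z(\cdot,\cdot,t) \leq 0$ for all $t \geq 0$.

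The first step is to confine any first positive maximum of $Z$ to an interior pair $(x_0,y_0)$ with $0 < d(x_0,y_0) < D$. On the diagonal $y = x$, the two inner-product terms cancel and the prescribed conditions $\psi(0,t)=0$ and $\tn_K(0)=0$ force $Z$ to extend continuously as $0$. On the boundary $\partial\Omega$, the Hopf boundary point lemma makes $|\nabla v|$ blow up in the inward-normal direction; strict convexity of $\Omega$ guarantees that a minimizing geodesic from $x \in \partial\Omega$ into the interior has $\gamma'(-s)$ pointing strictly into $\Omega$, so $-\langle \nabla v(x,t), \gamma'(-s)\rangle \to -\infty$ and $Z \to -\infty$; the case $y \in \partial\Omega$ is symmetric. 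The assumption $D \leq \pi/\sqrt K$ (when $K>0$) keeps $\tn_K$ bounded on $[0, D/2]$. I would then replace $\psi$ by $\psi + \epsilon e^{Ct}$ with $C$ large enough to absorb the nonlinear terms and turn the supersolution condition into a strict inequality, recovering the theorem on letting $\epsilon \to 0$.

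At such an interior first maximum $(x_0,y_0,t_0)$, the critical-point conditions $\partial_t Z \geq 0$, $\nabla_{(x,y)} Z = 0$, $\Hess_{(x,y)} Z \leq 0$ hold. Choose a parallel orthonormal frame $\{e_1, \dots, e_{n-1}, e_n = \gamma'\}$ along $\gamma$ and test the spatial Hessian against the $n-1$ symmetric Jacobi-field variations $r \mapsto (\exp_{x_0}(r e_i(x_0)), \exp_{y_0}(r e_i(y_0)))$ together with the longitudinal variation along $\gamma$. The second variation of arclength on $\mathbb{M}_K^n$ — constant sectional curvature $K$ — is controlled by the Sturm comparison, and the $n-1$ transverse contributions aggregate into exactly the $(n-1)\tn_K(s)$ term, while the longitudinal one supplies $\psi'(s,t_0)$ via the first-variation identity. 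Tracing the resulting Hessian inequality and substituting $\Delta v = -\lambda_1 - |\nabla v|^2$ at both endpoints yields
\[
\psi''(s,t_0) + 2\psi(s,t_0)\psi'(s,t_0) - 2\tn_K(s)\bigl(\psi'(s,t_0) + \psi(s,t_0)^2 + \lambda_1\bigr) \geq 0,
\]
which combined with the strict parabolic super-inequality for the perturbed $\psi$ gives $\partial_t\psi > 0$ at $(s,t_0)$, hence $\partial_t Z < 0$, contradicting $\partial_t Z \geq 0$.

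The principal technical obstacle is the middle step: the Hessian decomposition along the $n-1$ transverse Jacobi fields must produce each of the summands $-2\tn_K(s)\psi'$, $-2\tn_K(s)\psi^2$, and $-2\tn_K(s)\lambda_1$ with the correct coefficient. The $\lambda_1$ piece is the most delicate, arising only after the elliptic substitution $\Delta v = -\lambda_1 - |\nabla v|^2$ and requiring that the two endpoint $|\nabla v|^2$ contributions combine to match the one-dimensional $\psi^2$ term of the model. This matching is what imprints the one-dimensional Schr\"odinger structure of (\ref{psiflow}) onto the two-point function in arbitrary dimension.
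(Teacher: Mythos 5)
Your strategy is the same as that of the actual proof (which this paper does not reprove: it quotes the statement from \cite{seto-wang-wei}, Theorem 3.6, whose argument is exactly an Andrews--Clutterbuck style two-point parabolic maximum principle: perturb to make things strict, confine a first zero of the two-point quantity to an interior pair, apply first/second variation in a parallel frame, substitute $\Delta \log u = -\lambda_1 - |\nabla \log u|^2$, and contradict the supersolution inequality). So the architecture is right, and your observation that $\nabla\log u$ is time-independent, so $\partial_t Z=-2\partial_t\psi$, is correct. However, as written there are two genuine gaps. First, the boundary confinement: the Hopf-lemma argument alone does not force $Z\to-\infty$ near $\partial\hat\Omega$, because the minimizing geodesic from a boundary point $x$ can be nearly tangent to $\partial\Omega$, so $\langle\nabla\log u(x),\gamma'(-\tfrac d2)\rangle$ is a product of a large factor and a small angle and need not blow up; one needs the uniform neighborhood statement of Lemma \ref{boundary-cover-lemma} (together with Lemma \ref{Hessianest} near the diagonal), which is precisely why those lemmas are quoted in the paper, and your sketch should invoke them rather than Hopf.

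Second, and more seriously, the step you yourself flag as ``the principal technical obstacle'' is asserted rather than carried out, and it is the content of the theorem. At the interior maximum the transverse first-variation identities do not simply ``aggregate into $(n-1)\tn_K$'': moving both endpoints by parallel $e_i$ rotates the connecting geodesic through the Jacobi field $\tfrac{\cs_K(s)}{\cs_K(d_0/2)}e_i$, giving
\begin{equation*}
\nabla^2\log u(y_0)(e_i,e_n)-\nabla^2\log u(x_0)(e_i,e_n)=\tn_K(\tfrac{d_0}{2})\,\langle\nabla\log u(y_0)+\nabla\log u(x_0),e_i\rangle ,
\end{equation*}
and these cross terms must be recombined with the $-2\nabla^2\log u(\nabla\log u,e_n)$ terms produced by the elliptic substitution; only after discarding a quadratic term whose sign is controlled by $\tn_K\ge 0$ (this is where $K\ge 0$ and $D\le \pi/\sqrt K$ enter, along with smoothness of $d$ at the max) and using $\psi\le 0$ does one land on $\psi''+2\psi\psi'-2\tn_K(\psi'+\psi^2+\lambda_1)$. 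Your sketch never uses the hypotheses $\psi\le 0$ or $D\le\pi/\sqrt K$ except to say $\tn_K$ is ``bounded'' (it is not bounded as $D\to\pi/\sqrt K$; what matters is its nonnegativity and the absence of conjugate points), which indicates the bookkeeping that makes the one-dimensional operator appear with exactly these coefficients has not been done. Likewise, the $-2\tn_K(\tfrac{d_0}{2})$ contribution per transverse direction comes from the second variation of arclength with the $\cs_K$-weighted field, not from plain parallel fields, and the derivative of the $(n-1)\tn_K(\tfrac d2)$ term in $Z$ itself must also be accounted for. Filling in this multi-point calculus (the two-point Hessian and distance-variation identities of \cite{seto-wang-wei}) is what would turn your outline into a proof.
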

\begin{remark}\label{remark}
Note that the stationary solutions of $\psi$ satisfy
\begin{equation*}
0= (\psi'(s) + \psi^2(s) +\lambda_1)' - 2\tn_K(s)(\psi'+\psi^2(s)+\lambda_1).
\end{equation*}
Solving the ODE $y'-2\tn_K(s)y = 0$, we have $y= y(0)\cs_K^{-2}(s)$.  Hence an initial condition $y(0) = 0$ would imply the trivial solution in $y$, which is equivalent to $\psi'+\psi^2 + \lambda_1=0$.  The condition $y(0)=0$ can be obtained by adding the condition $\psi'(0) = -\lambda_1$. 
\end{remark}

Additionally, we will use the following two lemmas which control the Hessian log of positive functions vanishing at the boundary.  Note that the function is not necessarily the first eigenfunction.  We first look at the Hessian log itself near the boundary and in the interior.
\begin{lemma}[Lemma 3.4 \cite{seto-wang-wei}, Lemma 4.2 \cite{andrews-clutterbuck}]\label{Hessianest}
Let $\Omega$ be a uniformly convex bounded domain in a Riemannian manifold $M^n$,
and $u:  \overline{ \Omega}  \times \mathbb{R}_+ \to \mathbb{R}$ a $C^2$ function such that $u$ is positive on $\Omega$, $u(\cdot, t) = 0$ and  $\nabla u  \neq 0$ on $\dd\Omega$.   Given $T<\infty$,  there exists $r_1> 0$ such that $\nabla^2\log u|_{(x,t)} < 0$ whenever $d(x,\dd\Omega) < r_1$ and $t\in [0,T]$, and $N \in \mathbb{R}$ such that $\nabla^2\log u|_{(x,t)}(v,v) \leq N\|v\|^2$ for all $x \in \Omega$ and $t \in [0,T]$.
\end{lemma}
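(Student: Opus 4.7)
The plan is to split the analysis into two regions: a tubular neighborhood $\{x : d(x,\dd\Omega) < r_1\}$ of the boundary, and the complementary compact interior. On the interior, positivity of $u$ together with $C^2$ regularity yields a bounded Hessian of $\log u$; the real work is to produce a strictly negative Hessian near $\dd\Omega$. Since $u$ is $C^2$ on the compact parabolic cylinder $\overline{\Omega}\times [0,T]$, all the resulting bounds will come out uniform in $t$.

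For the interior estimate, on $\Omega_{r_1/2} := \{d(\cdot,\dd\Omega) \geq r_1/2\}$ continuity and positivity of $u$ give $u\geq c>0$ uniformly for $t\in[0,T]$, while the $C^2$ norm of $u$ is uniformly bounded there, so $\log u$ has uniformly bounded Hessian on $\Omega_{r_1/2}\times [0,T]$. Combined with the near-boundary estimate (which will deliver $\nabla^2\log u<0$), this furnishes a single constant $N$ with $\nabla^2\log u(v,v)\leq N\|v\|^2$ on all of $\Omega\times [0,T]$.

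For the boundary neighborhood, introduce Fermi coordinates $(y,s)$ with $s = d(\cdot,\dd\Omega)$. By the Hopf lemma $\langle\nabla u,\nabla s\rangle > 0$ on $\dd\Omega$, so $\tilde u := u/s$ extends to a strictly positive function in a collar of $\dd\Omega$ with $\tilde u|_{\dd\Omega} = |\nabla u|_{\dd\Omega}$. Writing $\log u = \log s + \log\tilde u$ gives
\be
\nabla^2 \log u \;=\; \frac{\nabla^2 s}{s} \;-\; \frac{\nabla s\otimes\nabla s}{s^2} \;+\; \nabla^2\log\tilde u.
\ee
The rank-one term $-\nabla s\otimes\nabla s/s^2$ contributes $-1/s^2$ in the normal direction. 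For the tangential block, uniform convexity of $\dd\Omega$, propagated along inward normal geodesics by the Riccati equation, yields $\nabla^2 s\leq -\epsilon(\mathrm{id}-\nabla s\otimes\nabla s)$ for $0<s<r_1$, so $\nabla^2 s/s$ is strongly negative definite tangentially of order $1/s$. Since $\nabla^2\log\tilde u$ is bounded by smoothness of $\tilde u$, decomposing $v=\alpha\nabla s+v_T$ and absorbing cross terms by Cauchy--Schwarz lets us shrink $r_1$ until $\nabla^2\log u<0$ on $\{0<s<r_1\}\times[0,T]$.

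The principal obstacle is showing that uniform convexity of $\dd\Omega$ really does persist on the inner parallel hypersurfaces $\{s = \text{const}\}$ for $s\in(0,r_1)$: the shape operator $S$ of these hypersurfaces evolves along the inward normal geodesic by a Riccati equation of the form $S' = S^2 + R(\cdot,\nabla s)\nabla s$, so one must pick $r_1$ small relative to a sectional-curvature bound to ensure a uniform tangential negativity of $\nabla^2 s$ survives. A secondary technical point is regularity: $u\in C^2$ only gives $\tilde u\in C^1$, so $\nabla^2\log\tilde u$ is not a priori pointwise defined; one bypasses this either by a slight mollification of $\tilde u$, or, in the application $u(x,t) = e^{-\lambda_1 t}\phi_1(x)$, by invoking boundary elliptic regularity to promote $\phi_1$ to $C^\infty(\overline\Omega)$ and thus $\tilde u$ to $C^\infty$.
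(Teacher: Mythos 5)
Your two-region strategy --- a strictly negative Hessian in a collar of $\partial\Omega$, driven by the blow-up of $-\nabla u\otimes\nabla u/u^2$ in the normal direction and by uniform convexity in the tangential directions, plus a trivial compactness bound on the complementary interior --- is exactly the strategy of the proof this paper cites (Lemma 4.2 of Andrews--Clutterbuck, Lemma 3.4 of Seto--Wang--Wei; the present paper only quotes the lemma). The Riccati point you raise about positivity of the shape operator of the inner parallel hypersurfaces is real but standard: for $r_1$ small relative to a curvature bound and the focal radius it causes no trouble. The uniformity in $t\in[0,T]$ also goes through by compactness of $\overline\Omega\times[0,T]$ as you say.

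The genuine problem is the one you half-acknowledge and then defer: the decomposition $\log u=\log s+\log\tilde u$ with $\tilde u=u/s$ requires $\nabla^2\log\tilde u$ to exist and be bounded, and under the stated hypothesis $u\in C^2$ the quotient $\tilde u$ is only $C^1$ up to $\partial\Omega$. Neither of your escape hatches closes this for the lemma as stated: mollifying $\tilde u$ changes the function whose Hessian must be controlled, and elliptic regularity is unavailable because the lemma is asserted for an arbitrary $C^2$ function $u$, not an eigenfunction. The repair is to abandon the splitting and estimate $\nabla^2\log u=\nabla^2u/u-\nabla u\otimes\nabla u/u^2$ directly from three facts that only use $u\in C^2$: (i) $c\,s\le u\le C\,s$ in the collar; (ii) for $v=\alpha\nu+v^\top$ one has $\langle\nabla u,v\rangle\ge c|\alpha|-Cs\|v^\top\|$; (iii) on $\partial\Omega$ the tangential Hessian satisfies $\nabla^2u(v^\top,v^\top)=-\langle\nabla u,\nu\rangle\,\SecFun(v^\top,v^\top)\le-\epsilon\|v^\top\|^2$ by uniform convexity, hence $\nabla^2u(v,v)\le-\tfrac{\epsilon}{2}\|v^\top\|^2+C|\alpha|+Cs\|v\|^2$ throughout the collar by continuity of $\nabla^2 u$. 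Balancing the two negative contributions $-\epsilon\|v^\top\|^2/(2u)$ and $-\langle\nabla u,v\rangle^2/u^2$ against the remaining $O(1/s)$ terms, exactly as in your Cauchy--Schwarz step, yields $\nabla^2\log u<0$ for $s<r_1$. (Minor point: you do not need the Hopf lemma for $\langle\nabla u,\nabla s\rangle>0$ on $\partial\Omega$ --- and could not invoke it, since $u$ satisfies no equation here; it follows directly from $u>0$ in $\Omega$, $u=0$ and $\nabla u\neq0$ on $\partial\Omega$.)
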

The next lemma controls the modulus of log concavity near the boundary.  Let $\hat\Omega := \Omega \times \Omega -\{(x,x)\ | \ x \in \Omega\}$.
\begin{lemma}[Lemma 3.5 \cite{seto-wang-wei}, Lemma 4.3 \cite{andrews-clutterbuck}] \label{boundary-cover-lemma}
Let $\Omega$ and $u$ be as in Lemma \ref{Hessianest} and let $\psi$ be continuous on $[0,D/2]\times\mathbb{R}_+$ and Lipschitz in the first argument, with $\psi(0,t) = 0$ for each $t$ with $D = $ diam $\Omega$. Then for any $T < \infty$ and $\beta > 0$, there exists an open set $U_{\beta,T} \subset M\times M$ containing $\dd\hat{\Omega}$ such that 
\begin{equation*}
\langle \nabla \log u(y,t),\gamma'(\tfrac{d}{2})\rangle - \langle \nabla \log u(x,t),\gamma'(-\tfrac{d}{2})\rangle - 2\psi\left(\frac{d(x,y)}{2},t\right) < \beta,
\end{equation*}
for all $t\in [0,T]$ and $(x,y) \in U_{\beta,T}\cap \hat{\Omega}$.
\end{lemma}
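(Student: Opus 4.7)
The plan is to cover $\partial \hat{\Omega}$ by open neighborhoods in $M \times M$ with the property that the claimed inequality holds on the intersection with $\hat{\Omega}$, and to let $U_{\beta, T}$ be their union. The key starting identity, obtained by parallel-transporting $\gamma'$ along the geodesic $\gamma$ (so that $\nabla_{\gamma'} \gamma' = 0$), is
\begin{equation*}
\langle \nabla \log u(y,t), \gamma'(\tfrac{d}{2}) \rangle - \langle \nabla \log u(x,t), \gamma'(-\tfrac{d}{2}) \rangle = \int_{-d/2}^{d/2} \Hess(\log u)(\gamma'(s), \gamma'(s)) \, ds,
\end{equation*}
which reduces everything to the behavior of $\Hess(\log u)$ along $\gamma$. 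Points of $\partial \hat{\Omega}$ split into two types: diagonal points $(z,z)$ with $z \in \overline{\Omega}$, and off-diagonal boundary points $(x_0, y_0)$ with $x_0 \neq y_0$ and at least one of them in $\partial \Omega$; I handle these separately.

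For a diagonal point $(z, z)$ I consider $(x,y)$ nearby, so $d(x, y)$ is small. If $z \in \Omega$, Lemma \ref{Hessianest} yields the uniform upper bound $\Hess(\log u)(v,v) \leq N \|v\|^2$ on $\overline{\Omega} \times [0, T]$, so the integral is at most $N d(x,y)$; combined with $\psi(d/2, t) \to \psi(0, t) = 0$ uniformly in $t \in [0, T]$ (from uniform continuity of $\psi$ on the compact rectangle $[0, D/2] \times [0, T]$), the left-hand side is smaller than $\beta$ on a small enough neighborhood of $(z, z)$. If $z \in \partial \Omega$, I shrink the neighborhood so that the entire geodesic $\gamma$ lies inside the $r_1$-collar where Lemma \ref{Hessianest} gives $\Hess(\log u) < 0$; the integral is then already negative and the $\psi$ term is again uniformly small.

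For an off-diagonal boundary point $(x_0, y_0)$ with, say, $x_0 \in \partial \Omega$ and $x_0 \neq y_0$, I show the left-hand side in fact tends to $-\infty$. Since $u \in C^2$, $u|_{\partial \Omega} = 0$, and $\nabla u \neq 0$ on $\partial \Omega$, joint continuity on the compact $\partial \Omega \times [0, T]$ produces $-\langle \nabla u(\xi, t), \nu(\xi) \rangle \geq c_0 > 0$ uniformly, and a Hopf-type Taylor expansion in the normal direction yields $\nabla \log u(x, t) = -\nu(\xi)/d_\partial(x) + O(1)$ as $x \to \xi \in \partial \Omega$, uniformly in $t \in [0, T]$. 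Uniform convexity of $\Omega$, together with $x_0 \neq y_0$, gives $\langle \gamma'(-\tfrac{d}{2}), -\nu(\xi) \rangle \geq c_1 > 0$ on a small enough neighborhood of $(x_0, y_0)$: this is immediate when $y_0 \in \Omega$, and when $y_0 \in \partial \Omega \setminus \{x_0\}$ it follows from strict convexity since the chord from $x_0$ to $y_0$ is strictly inward at $x_0$. Therefore $-\langle \nabla \log u(x, t), \gamma'(-\tfrac{d}{2}) \rangle \leq -c_1/d_\partial(x) + O(1) \to -\infty$. The other term $\langle \nabla \log u(y, t), \gamma'(\tfrac{d}{2}) \rangle$ is either bounded (when $y_0 \in \Omega$) or tends to $-\infty$ by the symmetric computation (when $y_0 \in \partial \Omega$), and $\psi$ is uniformly bounded on $[0, D/2] \times [0, T]$, so the left-hand side tends to $-\infty$ and hence is $< \beta$ near $(x_0, y_0)$.

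I expect the main obstacle to lie in the off-diagonal case, specifically the geometric step producing the uniform lower bound $c_1$ on the inward component of $\gamma'(-\tfrac{d}{2})$, together with continuous dependence of the minimizing geodesic $\gamma$ on its endpoints up to $\partial \Omega$. In $\mathbb{M}^n_K$ with $K \geq 0$ under the diameter condition $D \leq \pi/\sqrt{K}$ for $K > 0$ and the uniform convexity hypothesis, both are standard once one appeals to a positive lower bound on the second fundamental form of $\partial \Omega$, but they require some care rather than pure convexity.
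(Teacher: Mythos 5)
The paper does not actually prove this lemma --- it is quoted from Lemma 3.5 of \cite{seto-wang-wei} (Lemma 4.3 of \cite{andrews-clutterbuck}) --- and your argument is essentially the proof given in those sources: the diagonal portion of $\partial\hat{\Omega}$ is handled by the uniform Hessian upper bound $N$ of Lemma \ref{Hessianest} together with $\psi(0,t)=0$ and uniform continuity of $\psi$ on $[0,D/2]\times[0,T]$, while the off-diagonal boundary portion is handled by the blow-up $\nabla\log u(x,t)=-\nu/d_{\partial}(x)+O(1)$ forced by $u=0$, $\nabla u\neq 0$ on $\partial\Omega$, combined with the strict-convexity lower bound on the inward component of the chord direction. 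Your reconstruction is correct and takes the same route.
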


In order to use Theorem \ref{log-con-preserve2}, we need to show that our model satisfies the differential inequality.  
\begin{lemma}\label{eigvalcomp}
Let $\lambda_1$ be the first eigenvalue of the Laplacian on a convex domain $\Omega \subset S^n$ with $\diam\Omega =D$.  Then
\begin{equation}
\frac{\pi^2}{D^2} \leq \lambda_1.  \label{lambda1-lb}
\end{equation}
\end{lemma}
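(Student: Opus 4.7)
The plan is to establish the classical lower bound $\lambda_1\ge\pi^2/D^2$ by a domain-monotonicity argument, adapting the standard Euclidean proof to the spherical setting. In $\mathbb R^n$ the argument is immediate: a convex $\Omega$ of diameter $D$ lies inside a slab of width $D$, whose first Dirichlet eigenvalue is exactly $\pi^2/D^2$, and monotonicity closes the loop. On $\mathbb S^n$ the analogous spherical slab has first Dirichlet eigenvalue equal to $\bar\lambda_1(n,D,1)$ of Proposition~\ref{eigen-asym}, which is \emph{strictly less} than $\pi^2/D^2$ for $n\ge 2$, so extra care is needed.

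First I would pick $p,q\in\overline\Omega$ realizing the diameter and let $\gamma:[-D/2,D/2]\to\overline\Omega$ be the minimizing geodesic from $p$ to $q$, which lies in $\overline\Omega$ by convexity. Let $E$ be the totally geodesic hypersurface through the midpoint $\gamma(0)$ perpendicular to $\gamma'(0)$. Using $d(y,p),d(y,q)\le D$ for every $y\in\Omega$ (a direct consequence of the diameter bound) together with the spherical cosine rule, one verifies that $\Omega$ is contained in the geodesic slab $S=\{x\in\mathbb S^n:d(x,E)\le D/2\}$.

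Working in Fermi coordinates $(s,z)$ about $E$, the metric is $ds^2+\cos^2(s)\,g_E$ and the volume element is $\cos^{n-1}(s)\,ds\,dz$. For $\phi_1$ the first eigenfunction (or any $u\in H_0^1(\Omega)$) I would estimate
\[
\lambda_1\int_\Omega u^2\,dV=\int_\Omega|\nabla u|^2\,dV\ge\int_\Omega(\partial_s u)^2\,\cos^{n-1}(s)\,ds\,dz,
\]
and observe that by convexity each slice $I(z)=\Omega\cap\{z=\mathrm{const}\}$ is an interval of length $\le D$ on which $u$ vanishes at both endpoints. The unweighted one-dimensional Poincaré inequality then yields $\int_{I(z)}(u_s)^2\,ds\ge(\pi^2/D^2)\int_{I(z)}u^2\,ds$.

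The main obstacle is incorporating the spherical weight $\cos^{n-1}(s)$ while preserving the sharp constant $\pi^2/D^2$, since the full slab 1D weighted problem delivers only $\bar\lambda_1<\pi^2/D^2$. I would handle this via the substitution $w=\cos^{(n-1)/2}(s)\,u$, which converts the weighted Sturm--Liouville on each slice into an unweighted one with a Schr\"odinger-type potential, exploiting the fact that only the single central slice through $\gamma$ has length exactly $D$ (a measure-zero occurrence in the Fubini decomposition over $z$), so that a.e.\ slice gives a 1D Dirichlet eigenvalue that, after the substitution, exceeds $\pi^2/D^2$. If this direct route turns technical, one can alternatively appeal to the classical Dirichlet Poincar\'e inequality for convex domains in manifolds of nonnegative Ricci curvature, which includes the present spherical case.
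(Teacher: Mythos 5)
Your slicing strategy has a genuine gap at the crucial step. You correctly observe at the outset that the weighted one--dimensional slab problem has first eigenvalue $\bar\lambda_1(n,D,K)<\pi^2/D^2$ (for $n=2$ the Schr\"odinger potential in \eqref{schrodingernormal} is $\tfrac{K}{4}\bigl(\tfrac{-1}{\cs_K^2(s)}-1\bigr)<0$, and for every $n\ge 2$ the leading correction is $-\tfrac{n-1}{2}K$), but your proposed fix does not work. The claim that ``a.e.\ slice gives a 1D Dirichlet eigenvalue exceeding $\pi^2/D^2$'' because only the central slice has length exactly $D$ is false: the per--slice eigenvalue is continuous in the slice, so slices of length close to $D$ have eigenvalue close to $\bar\lambda_1(n,D,K)<\pi^2/D^2$, and such slices occupy a set of \emph{positive} measure in the Fubini decomposition (take $\Omega$ a geodesic ball of radius $D/2$: the normal geodesics meeting $E$ near the center all produce slices of length $\ge D-\epsilon$). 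Concretely, with $w=\cos^{(n-1)/2}(s)\,u$ one finds
\begin{equation*}
\int (\partial_s u)^2\cos^{n-1}(s)\,ds=\int (w')^2\,ds+\int\Bigl(\tfrac{(n-1)(n-3)}{4}\sec^2(s)-\tfrac{(n-1)^2}{4}\Bigr)w^2\,ds,
\end{equation*}
and the potential is $\le -\tfrac{n-1}{2}$ at $s=0$, so the unweighted Poincar\'e inequality on a slice of length $\ell$ only yields $\pi^2/\ell^2+\langle V\rangle$, which drops below $\pi^2/D^2$ as $\ell\to D$. The sharp constant simply cannot be recovered slice by slice in the normal direction; the tangential derivatives you discard are essential. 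Your fallback --- a ``classical Dirichlet Poincar\'e inequality for convex domains with nonnegative Ricci'' --- is not an off-the-shelf result either: the sharp $\pi^2/D^2$ diameter bound is classical for the first \emph{Neumann} (or closed) eigenvalue (Zhong--Yang/Payne--Weinberger type), and transferring it to the Dirichlet spectrum requires the comparison $\mu_1(\Omega)\le\lambda_1(\Omega)$, which is special to domains in spheres (Ashbaugh--Levine, Hsu--Wang); this is exactly the route the paper mentions in the remark preceding its proof.

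For comparison, the paper's actual argument avoids slabs entirely: by domain monotonicity it reduces to geodesic balls, writes the radial eigenvalue equation $-y''-(n-1)\cot(x)y'=\lambda_1 y$ on $(0,D/2)$ with $y'(0)=0$, $y(D/2)=0$, applies the one--dimensional Rayleigh quotient $\pi^2/D^2\le\int(y')^2/\int y^2$, integrates by parts, and discards the drift term $(n-1)\int\cot(x)\,y\,y'\le 0$ using $\cot\ge 0$, $y\ge 0$, $y'\le 0$. The sign of the drift term, not a per-slice Poincar\'e inequality, is what absorbs the curvature. If you want to salvage a direct argument, you should follow one of these two routes rather than the slab decomposition.
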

\begin{remark}
This can be shown by comparing the Neumann eigenvalues, indexed by $0 = \mu_0 < \mu_1 \leq \ldots $, and Dirichlet eigenvalues on the sphere, namely for domains $\Omega \subset S^n$ whose boundary has nonnegative mean curvature 
\begin{equation*}
\mu_k(\Omega) \leq \lambda_k(\Omega), \quad \forall k \geq 1.
\end{equation*}
This result can be found in \cite{ashbaugh-levine} or \cite{hsu-wang}.  Since $\frac{\pi^2}{D^2} \leq \mu_1(\Omega)$, where $D=\diam(\Omega)$, one has (\ref{lambda1-lb}).  We present an alternative short argument.
\end{remark}
\begin{proof}
By domain monotonicity for Dirichlet eigenvalues, it suffices to show the lower bound for balls since they are maximally convex sets.  By separation of variables, the first eigenfunction is given by
\begin{equation*}
-y''-(n-1)\cot(x)y' = \lambda_1y,  \quad \text{ on } (0,\tfrac{D}{2}).
\end{equation*}
with $y'(0) = 0$, $y(\tfrac{D}{2}) =0$ and normalized so that $y(0) =1$. From the Rayleigh quotient on Euclidean space, we have
\begin{align*}
\frac{\pi^2}{D^2} &\leq \frac{\int_0^{\frac{D}{2}}(y')^2}{\int_0^{\frac{D}{2}}y^2} 
= -\frac{\int_0^{\frac{D}{2}}yy''}{\int_0^{\frac{D}{2}}y^2}\\
&= \frac{(n-1)\int_0^{\frac{D}{2}}\cot(x)yy'+\lambda_1\int_0^{\frac{D}{2}}y^2}{\int_0^{\frac{D}{2}}y^2} \\
&\leq \lambda_1,
\end{align*}
since $\cot(x) \geq 0$, $y \geq 0$ and $y'\leq 0$. (c.f. \cite{ashbaugh-benguria}).
\end{proof}

\section{Proof of Theorem \ref{log-concavity}}
To prove the log-concavity estimate, we first need to construct a suitable initial function $\psi_0$ and then improve it by flowing so that it limits to the model solution. The construction is motivated and parallel to the one in \cite{andrews-clutterbuck, he-wei}. 

 Note that for $\phi_{0} = \cos(\tfrac{\pi s}{D})$, $\psi_0=(\log \phi_{0})'$ is a stationary solution of (\ref{psiflow}) that comes with the trivial solution for the ODE in Remark \ref{remark} which will satisfy the differential inequality.  Thus we are interested in solutions of the ODE
\begin{equation}\label{stationarysolution}
\psi'(s) + \psi^2(s) + \frac{\pi^2}{D^2} = -\frac{c}{\cs_K^2(s)},
\end{equation}
where $c$ is some constant (Note the difference in sign convention in \cite{he-wei}).  We choose the value $\frac{\pi^2}{D^2}$ here so that the solution will converge to the Euclidean model.
Note also that one of the boundary conditions for $\psi_0$ is singular. Therefore, we approximate it by a monotone sequence whose boundary values are regular. To this end, fix an integer $k >0$ and consider the solutions $\psi_c^L$ and $\psi^R_{c,k}$ with
\begin{equation}\label{lefteqn}
\begin{cases}
(\psi_c^L)' + (\psi^L_c)^2 + \frac{\pi^2}{D^2}+\frac{c}{\cs_K^2(s)}=0, & \text{on }(0,D/2)\\
\psi_c^L(0) = 0,
\end{cases}
\end{equation}
and
\begin{equation}\label{righteqn}
\begin{cases}
(\psi_{c,k}^R)' + (\psi^R_{c,k})^2 + \frac{\pi^2}{D^2}+\frac{c}{\cs_K^2(s)}=0, & \text{on }(0,D/2)\\
\psi_{c,k}^R(\tfrac{D}{2}) = -k.
\end{cases}
\end{equation}
In the following, we will first note that the solutions can be constructed by turning the Riccati equation into a second order linear equation and then solving it via the Pr\"ufer transformation. Then we point out that, for specific $c=c_k$ and $k$ sufficiently large,  the solutions comes from a Robin eigenvalue problem (with additional normalization).

Indeed, consider the second order linear equation
\begin{equation}\label{2ndordereqn}
\phi''(s)+\frac{\pi^2}{D^2}\phi(s)=-\frac{c}{\cs_K^2(s)}\phi(s), \   \text{ on }[0,D/2]
\end{equation}
The solutions to (\ref{2ndordereqn}) and the solutions to (\ref{stationarysolution}) are related by $\psi=(\log \phi)'$. Therefore we need positive solutions for (\ref{2ndordereqn}).

The Pr\"ufer transformation construction of the solution to (\ref{2ndordereqn}) is to consider a ``polar coordinate'' of the solutions
\begin{equation*}
\begin{cases}
\phi'(z) = r(z)\sin(q(z))\\
\phi(z) = r(z)\cos(q(z)),
\end{cases}
\end{equation*}
for some function $r(z)$ and $q(z)$.
The functions $q(z) = \arctan\left( \frac{\phi'(z)}{\phi(z)} \right)$ and $r^2(z) = (\phi'(z))^2+\phi^2(z)$ satisfies a system of first order ODEs
\begin{equation}\label{qsystem}
\begin{cases}
\frac{d q}{dz} =-\left(\frac{c}{\cs_K^2(z)}+\frac{\pi^2}{D^2}\right)\cos^2(q)-\sin^2(q)\\
q(0,q_0,c)=q_0,
\end{cases}
\end{equation}
and
\begin{equation}\label{rsystem}
\begin{cases}
\frac{dr}{dz} = \left(1-\frac{c}{\cs_K^2(z)}-\frac{\pi^2}{D^2}\right)r(z)\cos(q(z))\sin(q(z))
\end{cases}
\end{equation}
The system is partially decoupled and we solve \eqref{qsystem} first and then \eqref{rsystem}.  Then 
\begin{equation*}
\phi^L_{c}(z) = \exp\left( \int_0^{z}\tan q(s,0,c)ds\right)
\end{equation*}
is the positive solution which corresponds to the solution to (\ref{lefteqn}). Similarly
\begin{equation*}
\phi^R_{c, k}(z) = \exp\left( \int_{z}^{\frac{D}{2}}\tan q(\frac{D}{2}-s,k,c)ds\right)
\end{equation*}
gives rise to the solution of (\ref{righteqn}).

We now observe that, for sufficiently large $k$ and specific $c=c_k$, both solutions coincide and come from an eigenvalue problem. First of all, by ODE comparison, we see that $q(z,q_0,c)$ is strictly decreasing in $c$ for all $z$.  Furthermore, when $q_0 = 0$ and $c = 0$, this corresponds to the model situation $\phi_0=\cos\left(\frac{\pi}{D}z\right)$.  In terms of $q$, we have $q(\tfrac{D}{2},0,0) = -\frac{\pi}{2}$. 

 Therefore, for sufficiently large $k$, there exists a unique $c_k<0$ such that $$q(\tfrac{D}{2},0,c_k) = -\frac{\pi}{2}+\arctan(\tfrac{1}{k}).$$

Then
\begin{equation*}
\phi_{0,1/k}(z) = \frac{1}{k}\exp\left( -\int_{z}^{\frac{D}{2}}\tan q(s,0,c_k)ds\right)
\end{equation*}
with
\begin{align*}
\phi_{0,1/k}'(\tfrac{D}{2}) &= \frac{1}{k}\tan\left(\arctan(k^{-1})-\frac{\pi}{2}\right)\\
&=-\frac{1}{k}\cot(\arctan(k^{-1})) = -1.
\end{align*}
is the solution to the Robin eigenvalue problem (with additional normalization)
\begin{equation}\label{robinboundaryeqn}
\begin{cases}
(\phi_{0,1/k})''(s)+\frac{\pi^2}{D^2}\phi_{0,1/k}(s)=-\frac{c_k}{\cs_K^2(s)}\phi_{0,1/k}(s) & \text{ on }[0,D/2]\\
\phi_{0,1/k}(\tfrac{D}{2})=1/k\\
\phi'_{0,1/k}(\tfrac{D}{2})=-1\\
\phi'_{0,1/k}(0)=0\\
\phi_{0,1/k} > 0 & \text{ on }[0,D/2].
\end{cases}
\end{equation}

With this unique choice of $c_k$, we have $\psi^L_{c_k}=\psi^R_{ k, c_k} = (\log \phi_{0,1/k})'$.  
\begin{remark}
When $k \to \infty$ and $c \to 0$, the solution is given explicitly by $\phi_{0,0} = \phi_{0}=\cos(\tfrac{\pi s}{D})$.
\end{remark}
\begin{remark}
The constant $c_k$ in the Robin eigenvalue problem \eqref{robinboundaryeqn} depends on the value $k$ and is unique; in fact it is the smallest eigenvalue.  Therefore the equality $\psi^L_{c_k}=\psi^R_{k,c_k} = (\log \phi_{0,1/k})'$ holds for the specific choice of $c_k$ when $k$ is fixed.  In the following section, we show how the different choices for $c$ in \eqref{lefteqn} and \eqref{righteqn} affect the solutions.
\end{remark}

\subsection{Construction of supersolution}
Unlike the case of Andrews-Clutterbuck \cite{andrews-clutterbuck}, we do not have freedom in choosing different values for the eigenvalue $\frac{\pi^2}{D^2}$ in \eqref{stationarysolution} to use in our comparison.  However, we have freedom in the choice of $c$.  Using different value for $c$, we will obtain upper and lower bounds of our supersolution. By the ODE comparison, $\psi_c^L$ is strictly decreasing in $c$ on $0 < z \leq \frac{D}{2}$ and $\psi_{k,c}^R$ is strictly increasing in $c$.  Now for $k$, there is some fixed $c_k$ that solves \eqref{stationarysolution} via \eqref{robinboundaryeqn}. So for $c < c_k$ we have $\psi^L_{c} > (\log\phi_{0,1/k})'$ on $0 < z \leq \frac{D}{2}$ and for $c>c_k$ we have $\psi^R_{k,c} > (\log\phi_{0,1/k})'$ on $0\leq z < \frac{D}{2}$.

To obtain upper bounds, for $\lambda_+^2 \geq -c-\frac{\pi^2}{D^2}$, by ODE comparison, we have
\begin{equation*}
\psi_c^L(z) \leq \lambda_+ \tanh(\lambda_+z),
\end{equation*}
and for $\lambda_-^2 \geq \frac{c}{\cs_K^2(\frac{D}{2})}+\frac{\pi^2}{D^2}$, we have
\begin{equation*}
\psi^R_{k,c}(z) \leq \frac{\lambda_-\tan(\lambda_-(\tfrac{D}{2}-z))-k}{1+\tfrac{k}{\lambda_-}\tan(\lambda_-(\tfrac{D}{2}-z))}, \quad z>\frac{D}{2}-\frac{\tfrac{\pi}{2}+\arctan(\tfrac{k}{\lambda_-})}{\lambda_-}.
\end{equation*}

With the upper and lower bound, we can show existence of the supersolution
\begin{equation*}
\psi^+_{k,s} := \min\{\psi_{c_k-s}^L,\psi^R_{k,c_k+s}\}
\end{equation*}
for any $s \geq 0$.  This is a supersolution since both are bounded below by the solution $(\log\phi_{0,1/k})'$ for all $s\geq 0$.

\subsection{Lower bound of supersolution}
Next we show lower bounds of $\psi^+_{k,s}$ for large $s$ so that the supersolution is a modulus of concavity initially.  For
\begin{equation*}
s > \{ c_k+\tfrac{\pi^2}{D^2},-c_k-\tfrac{\pi^2}{D^2} \},
\end{equation*}
let 
\begin{align*}
\tilde\lambda_+ &= \sqrt{s-c_k-\tfrac{\pi^2}{D^2}}\\
\tilde\lambda_- &= \sqrt{s+c_k+\tfrac{\pi^2}{D^2}}.
\end{align*}
Since $\psi^L_{c_k-s}$ solves
\begin{equation*}
\psi'+\psi^2=-\tfrac{\pi^2}{D^2}-\frac{c_k-s}{\cs_K^2(z)} \geq \tilde\lambda^2_+
\end{equation*}
so that by ODE comparison, we have
\begin{equation*}
\psi^L_{c_k-s}(z) \geq \tilde\lambda_+\tanh(\tilde\lambda_+z), \quad 0 \leq z \leq z_0.
\end{equation*}
Similarly, $\psi_{c_k+s}^R$ solves
\begin{equation*}
\psi'+\psi^2=-\tfrac{\pi^2}{D^2}-\frac{c_k+s}{\cs_K^2(z)} \leq - \tilde\lambda_-^2,
\end{equation*}
so that
\begin{equation*}
\psi^R_{c_k+s}(z) \geq \frac{\tilde\lambda_-\tan(\tilde\lambda_-(\tfrac{D}{2}-z))-k}{1+\frac{k}{\tilde\lambda_-}\tan(\tilde\lambda_-(\tfrac{D}{2}-z))},\quad z_- \leq z \leq \frac{D}{2},
\end{equation*}
where $z_0 > \tfrac{D}{2}-\tilde\lambda_-^{-1}(\tfrac{\pi}{2}+\arctan(\tfrac{k}{\tilde\lambda_-}))$.

\subsection{Supersolution is an initial modulus}
Next we show that for each $k$, there is a sufficiently large $s$ such that $\psi_{k,s}^+$ is a modified modulus of concavity for $\log u_0$.

Using Lemma \ref{Hessianest}, there exists $N\in \mathbb{R}$ such that for all $x,y \in \Omega$,
\begin{align*}
\langle \nabla \log u(y,t),\gamma'(\tfrac{d}{2})\rangle - \langle \nabla \log u(x,t),\gamma'(-\tfrac{d}{2})\rangle &\leq \nabla^2\log u (\gamma',\gamma')d(x,y)\\
&\leq Nd(x,y)\\
&\leq 2\lambda \tanh\left(\frac{\lambda d(x,y)}{2}\right),
\end{align*}
where we choose $\lambda$ such that $ND\leq 2\lambda \tanh(\lambda D/2)$.

Next using Lemma \ref{boundary-cover-lemma} with $\psi(z) = \frac{6kz}{D}$ and $\beta = k$, there exists an open set $U\subset M\times M$ containing $\dd\hat\Omega$ ($\hat\Omega :=\Omega \times \Omega - \{(x,x)\ | \ x \in \Omega\}$).  In particular, we can cut out a neighborhood of the diagonal so that there exists a $\delta > 0$ such that $U$ contains all points $x,y \in \Omega$ such that $d(x,y) \geq D- \delta$.  Decreasing so that $\delta < \frac{D}{3}$ if necessary, we have for $d(x,y) \geq D - \delta$ that
\begin{align*}
\langle \nabla \log u(y,t),\gamma'(\tfrac{d}{2})\rangle - \langle \nabla \log u(x,t),\gamma'(-\tfrac{d}{2})\rangle &\leq -\frac{6kd(x,y)}{D}+k \\
&\leq 2\frac{\lambda\tan\left(\lambda\left(\frac{D-d(x,y)}{2}\right)\right)-k}{1+\frac{k}{\lambda}\tan\left(\lambda\left(\frac{D-d(x,y)}{2}\right)\right)},
\end{align*}
for $\lambda > 0$ such that $\frac{D-d(x,y)}{2} < \frac{\frac{\pi}{2}+\arctan(\tfrac{k}{\lambda})}{\lambda}$.  This can be done by choosing $\lambda$ sufficiently large so that $\frac{\pi}{2}+\arctan(\tfrac{k}{\lambda}) < \delta \lambda$. Hence for each $k$, there exists a smallest $s(k) \geq 0$ such that
\begin{equation*}
\langle \nabla \log u_1(y),\gamma'(-\tfrac{d}{2})\rangle - \langle \nabla \log u_1(x),\gamma'(\tfrac{d}{2})\rangle \leq \psi_{k,s(k)}^+\left(\frac{d(x,y)}{2}  \right).
\end{equation*}
Then let
\begin{equation*}
\psi_{k,0}(z) = \min\{\psi_{j,s(j)}^+(z) \ | \ 1 \leq j \leq k\}
\end{equation*}
for $0 \leq z \leq \frac{D}{2}$.  Since $(n-1)\tn_K(s)\geq 0$ for $[0,D/2)$, we can add this term to obtain the initial modified modulus of concavity.

\subsection{Flow into model eigenfunction}
Now we show that given our initial solution we constructed, the following parabolic equation will flow into $\psi = (\log\phi_1)'$.  Then by Theorem \ref{log-con-preserve2}, such a solution will satisfy our required log-concavity condition. Consider
\begin{equation*}
\begin{cases}
\frac{\dd\psi_k}{\dd t} = \psi_k''+2\psi_k\psi_k'-2\tn_K(s)(\psi'_k+\psi_k^2+\frac{\pi^2}{D^2}) & \text{ on }[0,\tfrac{D}{2}]\times\mathbb{R}_+\\
\psi_k(z,0) = \psi_{k,0}(z)\\
\psi_k(0,t) = 0\\
\psi_k(\tfrac{D}{2},t)=-k.
\end{cases}
\end{equation*}
By Lemma \ref{eigvalcomp}, the solution $\psi_k$ satisfies the differential inequality \eqref{psiflow}. Let $u := \psi_k-(\log\phi_{0,1/k})'$ and $f :=(\log\phi_{0,1/k})'$  
Computing, we have
\begin{align*}
2uu' &= 2(\psi_k-f)(\psi_k'-f') = 2\psi_k\psi_k'-2\psi_kf'-2f\psi_k'+2ff'
\end{align*}
and
\begin{align*}
u^2 = \psi_k^2-2\psi_kf+f^2
\end{align*}
and
\begin{align*}
f''+2ff'-2\tn_K(s)(f'+f^2+\tfrac{\pi^2}{D^2})=0.
\end{align*}
By direct computation, we have
\begin{align*}
\frac{\dd u}{\dd t} &= \psi_k''+2\psi_k\psi_k'-2\tn_K(s)(\psi'_k+\psi_k^2+\tfrac{\pi^2}{D^2})\\
&=u''+2uu'-2\tn_K(s)u^2+2u(f'-2\tn_K(s)f)+2(f-\tn_K(s))u'.
\end{align*}
Hence an equivalent equation in $u$ is given by
\begin{equation*}
\begin{cases}
\frac{\dd u}{\dd t} = u'' + 2uu'-2\tn_K(s)u^2+(2(\log \phi_{0,1/k})''-4\tn_K(s)(\log\phi_{0,1/k})')u+(2(\log \phi_{0,1/k})'-2\tn_K(s))u'\\
u(z,0) = \psi_{k,0}(z)-(\log \phi_{0,1/k})'(z)\\
u(0,t)=u(\tfrac{D}{2},t) = 0.
\end{cases}
\end{equation*}
The corresponding parabolic operator (as in \cite{he-wei}) is given by
\begin{equation*}
Pu= -u_t+u''+a(z,u,u')
\end{equation*}
where the lower order term $a(z,u,u')$ is given by
\begin{equation*}
2uu'+a_1u'+a_2u-2\tn_K(s)u^2,
\end{equation*}
with
\begin{align*}
a_1 &= (2(\log \phi_{0,1/k})'-2\tn_K(s))\\
a_2 &= (2(\log \phi_{0,1/k})''-4\tn_K(s)(\log\phi_{0,1/k})').
\end{align*}
Then we have the following maximum principle
\begin{lemma}[Lemma 4.1 \cite{he-wei}]
Suppose that $u,v \in C^{2,1}(R_T)\cap C(\bar R_T)$ such that $Pu \geq Pv$ in $R_T$ and $u\leq v$ on $\mathcal{P}(R_T)$.  Assume that either $u_z$ or $v_z$ has an upper bound on $R_T$, then $u\leq v$ on $\bar{R}_T$.
\end{lemma}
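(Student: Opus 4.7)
The plan is to reduce the nonlinear comparison to a standard linear parabolic weak maximum principle by linearizing the difference. Set $w := u - v$; the assumption $Pu \ge Pv$ reads
\[
-w_t + w_{zz} + \bigl(a(z,u,u_z) - a(z,v,v_z)\bigr) \ge 0 \quad \text{on } R_T.
\]
Using the algebraic identities
\[
uu_z - vv_z = v(u_z - v_z) + u_z(u-v) = u(u_z - v_z) + v_z(u-v), \qquad u^2 - v^2 = (u+v)(u-v),
\]
I rewrite $a(z,u,u_z) - a(z,v,v_z) = b_1(z,t)\, w_z + b_2(z,t)\, w$, which turns the inequality into the linear form
\[
w_t - w_{zz} - b_1 w_z - b_2 w \le 0 \quad \text{on } R_T, \qquad w \le 0 \quad \text{on } \mathcal{P}(R_T).
\]
The freedom in the decomposition is precisely what lets me use the asymmetric hypothesis: if $v_z$ has an upper bound, take $b_1 = 2u + a_1 - 2\tn_K(s)\,(\cdot)$-free piece and $b_2 = 2v_z + a_2 - 2\tn_K(s)(u+v)$; if instead $u_z$ has an upper bound, switch the roles. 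Either way, since $a_1, a_2, \tn_K$ are bounded on $[0,D/2]$, and $u, v \in C(\bar R_T)$ are bounded on the compact closure, $b_2$ is bounded above on $R_T$ by some constant $C$.

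Once $b_2 \le C$, I set $\tilde w := e^{-Ct} w$, which satisfies
\[
\tilde w_t - \tilde w_{zz} - b_1 \tilde w_z - (b_2 - C)\tilde w \le 0,
\]
with zeroth-order coefficient $b_2 - C \le 0$. I then apply the classical weak maximum principle: suppose for contradiction that $\max_{\bar R_T} \tilde w > 0$. Because $\tilde w \le 0$ on $\mathcal{P}(R_T)$ and $\tilde w \in C(\bar R_T)$, this maximum is attained at some interior-in-space point $(z_0, t_0)$ with $t_0 \in (0,T]$. To avoid having to separate equality and strict inequality at the maximum, apply the argument to $\tilde w - \eps t$ for $\eps > 0$: at its positive maximum one has $\tilde w_z = 0$, $\tilde w_{zz} \le 0$, $\tilde w_t \ge \eps$, which together with the differential inequality yields $\eps \le (b_2 - C)\tilde w \le 0$, a contradiction. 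Letting $\eps \downarrow 0$ gives $\tilde w \le 0$, hence $w \le 0$ on $\bar R_T$.

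The main obstacle is keeping $b_2$ bounded above after linearization; this is where the one-sided control on a derivative enters in an essential way. The term $2 u_z$ (or $2 v_z$) appearing in $b_2$ is the only potentially unbounded contribution, so the decomposition must be matched to whichever of $u_z, v_z$ is hypothesized to have the upper bound. All other terms in $b_2$ are automatically controlled because $u, v$ are continuous on the compact parabolic cylinder and the coefficients $a_1, a_2, \tn_K$ are smooth on $[0, D/2]$. The coefficient $b_1$ never needs a bound beyond finiteness on $R_T$, since it is multiplied by $\tilde w_z = 0$ at the maximum.
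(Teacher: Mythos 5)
Your proof is correct and is essentially the argument behind Lemma 4.1 of \cite{he-wei}, which this paper cites rather than reproves: linearize $Pu-Pv$ using the asymmetric splitting of $2uu_z-2vv_z$ so that the zeroth-order coefficient carries whichever of $u_z$, $v_z$ is assumed bounded above (the other derivative only enters the first-order coefficient, which is irrelevant at a spatial maximum), then apply the weak maximum principle to $e^{-Ct}(u-v)-\eps t$. The supporting facts you invoke --- boundedness of $a_1$, $a_2$, $\tn_K$ on $[0,D/2]$ and of $u,v$ on $\bar R_T$ --- do hold in the setting where the lemma is applied, so there is no gap.
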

Here $R_T = (0,D/2)\times (0,T]$, $\mathcal{P}(R_T)$ is the parabolic boundary, and $C^{2,1}$ means $C^2$ in the spacial variable and $C^1$ in the $t$ variable.
From here the same argument (in \S 4, \S 5 of \cite{he-wei}) follows.  Namely one applies the maximum principle to show that $\psi_k(z, t)$ is sandwiched between $(\log\phi_{0,1/k})'(z)$ and $\psi_{k,0}(z)$. To obtain the comparison for $\psi_{k,0}$, we require that the functions $\psi^L_c$ and $\psi^R_{k,c}$ are stationary solutions.  Then applying the strong maximum principle, we get for each $k>0$ the convergence of the solution $\psi_k(z,t) \to (\log \phi_{0,{1/k}})'$ as $t \to \infty$.  Letting $k\to \infty$ gives the result.

\section{Gap Estimate}
Parallel to  \cite[Theorem 4.1]{seto-wang-wei}, we have the following gap estimate.
\begin{theorem}  \label{gap-comp}
	Let $\Omega$ be a bounded convex domain with diameter $D$ in a Riemannian manifold $M^n$ with $\Ric_M \ge (n-1)K$,  $\phi_1$ a positive first eigenfunction of the Laplacian on $\Omega$ with Dirichlet boundary condition. Assume $\phi_1$ satisfies the log-concavity estimates
	\be
	\langle \nabla \log \phi_1 (y), \gamma'(\tfrac{d}{2}) \rangle - \langle \nabla\log \phi_1(x),\gamma'(-\tfrac{d}{2})\rangle  \leq -2 \tfrac{\pi}{D} \tan \left( \tfrac{\pi d}{2D} \right)  + (n-1)\tn_K (\tfrac{d}{2}),		
	\ee
	where $\gamma$ is the unit-speed length minimizing geodesic with $\gamma(-\tfrac{d}{2}) =x$, \  $\gamma(\tfrac{d}{2}) = y$,  and  $d= d(x,y)$. 
	Then we have the gap estimate
	\begin{equation}
		\lambda_2- \lambda_1 \ge 3 \frac{ \pi^2}{D^2}.  \label{gap-comp}
	\end{equation}
\end{theorem}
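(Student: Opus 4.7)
The plan is to adapt the two-point maximum principle of Andrews--Clutterbuck, following the sphere version in \cite{seto-wang-wei}. Set $v := \phi_2/\phi_1$, which satisfies the drifted eigenvalue equation
\begin{equation*}
\Delta v + 2\nabla\log\phi_1\cdot\nabla v = -(\lambda_2-\lambda_1) v \qquad \text{on } \Omega.
\end{equation*}
Since $\phi_2 \perp \phi_1$ in $L^2$, $v$ changes sign; after rescaling I assume $\sup_\Omega v = -\inf_\Omega v = 1$. The model function is the odd Euclidean-type eigenfunction $w(s) := \sin(\pi s/D)$ on $[-D/2, D/2]$, satisfying $w(\pm D/2)=\pm 1$ and
\begin{equation*}
w''(s) - 2\tfrac{\pi}{D}\tan(\tfrac{\pi s}{D})\,w'(s) = -3\tfrac{\pi^2}{D^2}\,w(s).
\end{equation*}

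Next I introduce the two-point function $Z(x,y) := v(y)-v(x) - 2C\,w(d(x,y)/2)$ on $\hat\Omega := (\Omega\times\Omega)\setminus\{(x,x)\}$ and let $C_0 \geq 1$ be the smallest $C$ for which $\sup_{\hat\Omega} Z \leq 0$, so that $\sup Z = 0$. Lemma~\ref{Hessianest} (via a Taylor expansion of $v$) controls $Z$ near the diagonal while Lemma~\ref{boundary-cover-lemma} controls it near $(\partial\Omega\times\Omega)\cup(\Omega\times\partial\Omega)$, ruling out the supremum from $\partial\hat\Omega$. Hence the sup is attained at an interior $(x_0,y_0)$ with $x_0\neq y_0$ and $d := d(x_0,y_0)\in(0,D)$; let $\gamma$ be the unit-speed minimizing geodesic with $\gamma(-d/2)=x_0$, $\gamma(d/2)=y_0$. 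Perpendicular first-order variations force $\nabla v(y_0)\perp E_i$ and $\nabla v(x_0)\perp E_i$ for each perpendicular parallel $E_i$ (since the first variation of $d$ vanishes perpendicular to $\gamma'$), so $\nabla v$ is tangent to $\gamma$ at both endpoints; tangential variations fix the tangential components, giving
\begin{equation*}
\nabla v(y_0) = C_0 w'(\tfrac{d}{2})\gamma'(\tfrac{d}{2}),\qquad \nabla v(x_0) = C_0 w'(\tfrac{d}{2})\gamma'(-\tfrac{d}{2}).
\end{equation*}

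At second order, tangential perturbations give $\nabla^2 v(\gamma',\gamma')|_{y_0} - \nabla^2 v(\gamma',\gamma')|_{x_0} \leq 2 C_0 w''(d/2)$; perpendicular perturbations with a parallel orthonormal frame $\{E_i\}_{i=1}^{n-1}$, combined with the index form applied to the test vector field $f(t) E_i$ with $f(t) = \cs_K(t)/\cs_K(d/2)$ and $\Ric \geq (n-1)K$, yield $\sum_i \partial^2_s d(x_s,y_s)|_0 \leq -2(n-1)\tn_K(d/2)$ (the key identity being $\int_{-d/2}^{d/2}((f')^2 - K f^2)\,dt = -2\tn_K(d/2)$), whence
\begin{equation*}
\sum_{i=1}^{n-1}\bigl[\nabla^2 v(E_i,E_i)|_{y_0} - \nabla^2 v(E_i,E_i)|_{x_0}\bigr] \leq -2(n-1) C_0 w'(\tfrac{d}{2})\tn_K(\tfrac{d}{2}).
\end{equation*}
Summing to obtain the Laplacian difference, substituting the drifted equation, and applying the log-concavity hypothesis to the tangential cross term $\nabla\log\phi_1(y_0)\cdot\nabla v(y_0) - \nabla\log\phi_1(x_0)\cdot\nabla v(x_0)$ contributes exactly $C_0 w'(d/2)\bigl[-2\tfrac{\pi}{D}\tan(\tfrac{\pi d}{2D}) + (n-1)\tn_K(d/2)\bigr]$; the $(n-1)\tn_K$ part cancels the curvature contribution from the perpendicular second variation. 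The remaining inequality reduces via the model equation for $w$ to
\begin{equation*}
-(\lambda_2-\lambda_1)w(\tfrac{d}{2}) \leq w''(\tfrac{d}{2}) - 2\tfrac{\pi}{D}\tan(\tfrac{\pi d}{2D}) w'(\tfrac{d}{2}) = -3\tfrac{\pi^2}{D^2}w(\tfrac{d}{2}),
\end{equation*}
which gives $\lambda_2 - \lambda_1 \geq 3\pi^2/D^2$ since $w(d/2)>0$. The main obstacle is the boundary/diagonal analysis placing $\sup Z$ at an interior point with $d\in(0,D)$, which requires both preliminary lemmas from \S 2; the rest of the argument hinges on the precise cancellation of the $(n-1)\tn_K$ correction, which is exactly the feature that Theorem~\ref{log-concavity} is tailored to provide.
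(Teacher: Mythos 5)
Your Case 1 argument (off-diagonal maximum) is essentially the paper's: the same model $\sin(\pi s/D)$, the same first-variation gradient identities, the same perpendicular second variation with the index form and $\Ric\ge(n-1)K$ producing the $-2(n-1)\tn_K(d/2)$ term, and the same cancellation of that term against the $(n-1)\tn_K$ correction in the log-concavity hypothesis. Whether one works with the difference $Z=v(y)-v(x)-2Cw(d/2)$ or the quotient $Q=(v(x)-v(y))/\bar w(d/2)$ is immaterial.

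The genuine gap is your treatment of the diagonal. You assert that Lemma \ref{Hessianest} ``via a Taylor expansion of $v$'' rules out the supremum being attained as $y\to x$. It does not: Lemmas \ref{Hessianest} and \ref{boundary-cover-lemma} are statements about $\log u$ for a positive function vanishing on $\partial\Omega$ (i.e.\ about the \emph{first} eigenfunction), not about the sign-changing ratio $v=\phi_2/\phi_1$, and no Taylor expansion excludes the diagonal. Indeed, with $C_0$ chosen minimal, near the diagonal $Z(x,y)=\bigl(\langle\nabla v(x),X\rangle-C_0\tfrac{\pi}{D}\bigr)d+O(d^2)$, and if $\sup_\Omega\|\nabla v\|=C_0\tfrac{\pi}{D}$ the supremum of $Z$ is exactly $0$ attained only in the limit $y\to x$; this happens, for instance, already for the one-dimensional model itself. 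This is precisely Case 2 of the paper's proof, and it is not a removable technicality: one must extend $Q$ to the unit sphere bundle, show the maximizing point of $\|\nabla v\|$ is interior (using strict convexity and the Neumann condition $\partial v/\partial\nu=0$ from the Singer--Wong--Yau--Yau extension --- this, not Lemma \ref{boundary-cover-lemma}, is also what excludes the boundary for $v$), and then run a separate second-variation argument along radial geodesics combined with the Bochner formula, the hypothesis $\Ric\ge(n-1)K$, and the \emph{infinitesimal} form of the log-concavity, $\nabla^2\log\phi_1\le-(\tfrac{\pi^2}{D^2}-\tfrac{(n-1)K}{2})\,\mathrm{id}$, to again reach $\lambda_2-\lambda_1\ge 3\pi^2/D^2$. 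Until you supply this case, the proof is incomplete.
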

The proof is similar to the proof of  \cite[Theorem 4.1]{seto-wang-wei}, but we compare to the Euclidean model instead of the  curvature $K$-sphere model. 
\begin{proof}
Let $w(x) = \frac{u_2(x)}{u_1(x)}$ and $\bar{w}(s) = \frac{\bar\phi_2(s)}{\bar\phi_1(s)}$ where $u_i$ are the first and second eigenfunctions of the Laplacian on $\Omega$ with Dirichlet boundary and $\bar\phi_i$ are the first and second eigenfunctions of the Euclidean model
\begin{equation*}
\begin{cases}
\bar\phi'' +\bar\lambda\bar\phi = 0 & \text{ on } [-D/2,D/2]\\
\bar\phi(\pm D/2) = 0.
\end{cases}
\end{equation*}
In fact, $\bar{\phi}_1(s) = \cos\left(\frac{\pi}{D} s\right), \ \bar{\phi}_2(s) = \sin \left(\frac{2\pi}{D} s\right)$, $\bar{\lambda}_1 =  \frac{ \pi^2}{D^2}, \ \bar{\lambda}_2 =  \frac{ 4\pi^2}{D^2}$, $\bar w(s) = 2\sin(\tfrac{\pi}{D}s)$, and $\left(\log \bar\phi_1\right)' (s) = -\frac{\pi}{D}\tan \left( \tfrac{\pi s}{D} \right)$. 

By direct computation,
\begin{align}
\begin{split}\label{elliptic}
\nabla w &= \frac{\nabla u_2}{u_1}-w\nabla\log u_1,\\
\Delta w &= -(\lambda_2-\lambda_1)w-2\langle \nabla \log u_1,\nabla w\rangle,\\
\bar w' &= \frac{\bar\phi_2'}{\bar\phi_1}-\frac{\bar\phi_2\bar\phi_1'}{\bar\phi_1^2} = 2\tfrac{\pi}{D}\cos(\tfrac{\pi}{D}s),\\
\bar w'' &= -(\bar\lambda_2-\bar\lambda_1)\bar w - 2(\log \bar\phi_1)'\bar w' = -2\tfrac{\pi^2}{D^2}\sin(\tfrac{\pi}{D}s).
\end{split}
\end{align}

We can extend $w$ to a smooth function on $\overline{\Omega}$ with Neumann condition $\frac{\dd w}{\dd \nu} = 0$ on $\dd\Omega$ \cite{singerwongyauyau}, same for $\bar{w}$.  Let 
\begin{align*}
Q(x,y) = \frac{w(x)-w(y)}{\bar w\left( \frac{d(x,y)}{2}\right)}
\end{align*}
on $\overline{\Omega} \times \overline{\Omega} \setminus \Delta$, where $\Delta = \{(x,x)| x \in \overline{\Omega} \}$ is the diagonal.  Since
\begin{align*}
\lim_{y \to x}Q(x, y)  = 2\frac{\langle\nabla w(x), X \rangle }{\bar{w}'(0)},
\end{align*}
where $X = \gamma'(0)$ and $\gamma$ is the unique unit speed length minimizing geodesic connecting $x$ to $y$,
we can extend the function $Q$ to the unit sphere bundle $U\Omega = \{(x,X) \ | \ x \in \bar{\Omega}, \|X\|=1\}$  as
\begin{equation*}
Q(x,X) = \frac{2\langle \nabla w(x),X\rangle}{\bar{w}'(0)}.
\end{equation*}
The maximum of $Q$ then is achieved.

Case 1:  the maximum of $Q$ is achieved at $(x_0, y_0)$ with $x_0 \neq y_0$.  Denote $d_0 = d(x_0,y_0)>0$,   $m = Q(x_0,y_0)>0$ the maximum value. At $(x_0, y_0)$,  we have  $\nabla Q = 0, \ \nabla^2 Q \le 0$.
 The Neumann condition $\frac{\dd w}{\dd \nu} = 0$ and strict convexity of $\Omega$ forces that both $x_0$ and $y_0$ must be in $\Omega$.

Let $\gamma$ be the unit-speed length minimizing geodesic such that $\gamma(-\tfrac{d_0}{2})=x_0$ and $\gamma(\tfrac{d_0}{2})=y_0$.  Let $e_n := \gamma'$ and extend to an orthonormal basis $\{e_i\}$ by parallel translation along $\gamma$.  Denote $E_i = e_i \oplus e_i$ for $i = 1, \ldots,n$; $E_n = e_n \oplus (-e_n)$.

For $E \in T_xM\oplus T_yM$,
\begin{equation}\label{Q'}
\nabla_E Q = \frac{\nabla_E w(x) - \nabla_Ew(y)}{\bar w} - \frac{(w(x)-w(y))}{\bar w^2}(\nabla_E\bar w),
\end{equation}
and
\begin{equation}\label{Q''1}
\nabla_{E,E}^2 Q = \frac{\nabla_{E,E}^2w(x)-\nabla_{E,E}^2w(y)}{\bar w}-\frac{2}{\bar w}(\nabla_E Q)(\nabla_E\bar w)-\frac{Q}{\bar w}\nabla_{E,E}^2\bar w.
\end{equation}
Hence at $(x_0,y_0)$, 
\begin{align*}
0 &= \frac{\nabla_E w(x_0)-\nabla_Ew(y_0)}{\bar w} - \frac{m}{\bar w}(\nabla_E\bar w),\\
0 &\geq \frac{\nabla^2_{E,E}w(x_0) - \nabla^2_{E,E}w(y_0)}{\bar w}-\frac{m}{\bar w}\nabla^2_{E,E}\bar w.
\end{align*}
We apply these to various directions.  From $\nabla_{0\oplus e_i}Q = \nabla_{e_i\oplus 0}Q = 0$ so that
\begin{align*}
\nabla_{e_i}w(y_0) = \nabla_{e_i}w(x_0) = 0
\end{align*}
for $i= 1, \ldots, n-1$ and
\begin{equation*}
\nabla_{e_n}w(y_0) = \nabla_{e_n}w(x_0) = -\frac{m}{2}\bar w'(\tfrac{d_0}{2}).
\end{equation*}
so that the full gradient is given by
\begin{equation*}
\nabla w(y_0) = \nabla w(x_0) = -\frac{m}{2}\bar w'(\tfrac{d_0}{2})e_n.
\end{equation*}
Summing over the second order inequalities, we get
\begin{align*}
0 &\geq \frac{\Delta w(x_0)-\Delta w(y_0)}{\bar w} - \frac{m}{\bar w}\sum_{i=1}^n\nabla^2_{E_i,E_i}\bar w (\tfrac{d_0}{2}).
\end{align*}
Since $\bar w' \geq 0$, by the ``Two Point Laplacian Comparison" (see e.g. \cite[(4.5)]{seto-wang-wei}) we have $\sum_{i=1}^{n-1}\nabla^2_{E_i,E_i}\bar w (\tfrac{d_0}{2}) \leq -(n-1)\tn_K (\tfrac{d_0}{2})\,  \bar w'(\tfrac{d_0}{2})$.  Plugging this in, and using \eqref{elliptic}, we get
\begin{align*}
0 &\geq -(\lambda_2-\lambda_1)m+2\frac{\langle \nabla\log u_1(y_0),\nabla w(y_0)\rangle - \langle \nabla \log u_1(x_0),\nabla w(x_0)\rangle}{\bar w} + (n-1)\frac{m}{\bar w}\tn_K\bar{w}' -\frac{m}{\bar w}\bar w'' \\
&=-(\lambda_2-\lambda_1)m+(\bar\lambda_2-\bar\lambda_1)m\\
&\hspace{0.2 in}+2m(\log\bar\phi_1)'\frac{\bar w'}{\bar w}-m\bar w'\frac{\langle \nabla\log u_1(y_0),e_n\rangle - \langle \nabla \log u_1(x_0),e_n\rangle}{\bar w} + (n-1)\frac{m}{\bar w}\tn_K\bar{w}' \\
&\geq -(\lambda_2-\lambda_1)m+(\bar\lambda_2-\bar\lambda_1)m,
\end{align*}
which is (\ref{gap-comp}).

Case 2:  the maximum of $Q$ is attained at some $(x_0, X_0) \in U\Omega$.  
  By Cauchy-Schwarz inequality, the corresponding maximal direction is $X_0 = \frac{\nabla w}{\|\nabla w\|}$ so that the maximum value is $m = \frac{D}{\pi}\|\nabla w\|$.  Furthermore, $\|\nabla w(x_0)\| \geq \|\nabla w(x)\|$ for any $x \in \bar{\Omega}$.  Suppose $x_0 \in \dd\Omega$, then by (strict) convexity,
\begin{align*}
\nabla_n \|\nabla w\|^2 |_{x_0} = -\SecFun(\nabla w,\nabla w)|_{x_0} < 0
\end{align*}
hence the maximum must occur in the interior.  Now let $e_n := \frac{\nabla w}{\|\nabla w\|}$ and complete to an orthonormal frame $\{e_i\}$ at $x_0$.  We further parallel translate to a neighborhood of $x_0$.  In such a frame we have
\begin{equation*}
\nabla_n w = \langle \nabla w, e_n\rangle = \|\nabla w\|
\end{equation*}
and
\begin{equation*}
\nabla_i w = \langle \nabla w, e_i\rangle = 0, \quad i=1,\ldots, n-1
\end{equation*}
At the maximal point $x_0$, we have the first derivative vanishing
\begin{equation*}
0=\nabla\|\nabla w\|^2 = 2\langle \nabla\nabla w, \nabla w\rangle = 2\|\nabla w\|\nabla_n \nabla w,
\end{equation*}
and the second derivative non-positive
\begin{align*}
0 &\geq \nabla_k\nabla_k \|\nabla w\|^2 \\
&=2\left(\langle \nabla_k\nabla_k\nabla w,\nabla w\rangle + \|\nabla_k\nabla w\|^2\right)\\
&\geq 2\langle \nabla_k\nabla_k\nabla w,\nabla w\rangle \\
&=2\|\nabla w\| \langle \nabla_k\nabla_k\nabla w,E_n\rangle.
\end{align*}
In short
\begin{equation}\label{gapestsecondvar}
0 \geq \langle \nabla_k\nabla_k \nabla w, e_n\rangle, \quad k=1,\ldots n-1.
\end{equation}
Now let
\begin{align*}
&x(s) := \exp_{x_0}(se_n)\\
&y(s) := \exp_{x_0}(-se_n)\\
&g(s) := Q(x(s),y(s)).
\end{align*}
By construction, since the variations are approaching $x_0$ in the $e_n$ direction, we have
\begin{equation*}
m = Q(x_0,e_n(x_0))= g(0) \geq g(s), \quad \text{ for all } s\in (-\eps,\eps).
\end{equation*}
and so $\lim_{s\to 0} g'(s) = 0$ and $\lim_{s\to 0} g''(s) \leq 0$.  By (\ref{Q'}), (\ref{Q''1})
\begin{align*}
g'(s) 
&=\frac{\langle \nabla w,x'(s)\rangle - \langle \nabla w,y'(s)\rangle}{\bar{w}(s)} - \frac{g(s)}{\bar{w}(s)} \bar{w}', \\
g''(s) &= \frac{\langle \nabla_s\nabla w(x(s)),x'(s)\rangle + \langle \nabla w(x(s)),x''(s)\rangle - \langle \nabla_s\nabla w(y),y'(s)\rangle - \langle \nabla w,y''(s)\rangle}{\bar{w}}\\
&-\frac{\langle \nabla w,x'(s)\rangle- \nabla w,y'(s)\rangle}{\bar{w}}\frac{\bar{w}'}{\bar{w}}-g'(s)\frac{\bar{w}'}{\bar{w}}-g(s)\left(\frac{\bar{w}''}{\bar{w}}-\left(\frac{\bar{w}'}{\bar{w}}\right)^2\right).
\end{align*}
Using $\bar{w}'' = -\frac{\pi^2}{D^2}\bar{w}$ and
\begin{align*}
x''(s) &= \frac{d}{ds} x'(s) \\
&=\nabla_{x'(s)}x'(s) = 0,
\end{align*}
and similarly for $y''(s)$, when $s\to 0$ we have
\begin{align*}
0 &\geq 2\frac{\langle \nabla_n\nabla_n \nabla w,e_n\rangle}{\bar{w}'(0)} + m\frac{\pi^2}{D^2}
\end{align*}
Combining this with \eqref{gapestsecondvar}, we have
\begin{equation*}
0 \geq 2\frac{\langle \Delta(\nabla w),e_n \rangle}{\bar{w}'(0)} + m\frac{\pi^2}{D^2}.
\end{equation*}
By Bochner formula,
\begin{equation*}
0 \geq 2\frac{\langle \nabla (\Delta w),e_n \rangle +\Ric(\nabla w, e_n)}{\bar{w}'(0)} + m\frac{\pi^2}{D^2}.
\end{equation*}
Inserting in \eqref{elliptic}, we have
\begin{align*}
0 &\geq 2\frac{\langle \nabla (-(\lambda_2 - \lambda_1)w - 2\langle\nabla\log u_1,\nabla w\rangle ), e_n \rangle +\Ric(\nabla w, e_n)}{\bar{w}'(0)} + m\frac{\pi^2}{D^2} \\
& = (-2(\lambda_2-\lambda_1)- 4\langle \nabla_n \nabla\log u_1, e_n\rangle + 2\Ric(e_n,e_n))\frac{\|\nabla w\|}{\bar{w}'(0)} + m\frac{\pi^2}{D^2}
\end{align*}
From the log-concavity
\begin{equation*}
\frac{\langle \nabla \log u_1 (y), \gamma'(\tfrac{d}{2}) \rangle - \langle \nabla\log u_1(x),\gamma'(-\tfrac{d}{2})\rangle}{d(x,y)} \leq -2\frac{\pi}{D}\frac{\tan\left(\frac{\pi d(x,y)}{2D} \right)}{d(x,y)} + (n-1)\frac{\tn_K(\tfrac{d(x,y)}{2})}{d(x,y)},
\end{equation*}
and letting $d(x,y) \to 0$ we have $-\nabla^2\log u_1 \geq \frac{\pi^2}{D^2}-\frac{(n-1)K}{2}$.  Using the fact that $\bar{w}'(0) = \frac{\pi}{D}$ and $m= \frac{D}{\pi}\|\nabla w\|$,
\begin{equation*}
(\lambda_2 - \lambda_1) \geq 3\frac{\pi^2}{D^2} 
\end{equation*}
\end{proof}

\section{Eigenvalue Asymptotics of the Sphere Model}
First we recall the derivation of the one-dimensional model used in \cite{seto-wang-wei}.  Let $\mathbb M^n_K$ be the $n$-dimensional simply connected manifold with constant sectional curvature $K$.   Given  a totally geodesic hypersurface  $\Sigma \subset \mathbb M^n_K$, let $s$ be the (signed) distance to $\Sigma$. 
The metric of $\mathbb M^n_K$ (near $\Sigma$) can be written as  $
g = ds^2 + \cs^2_K (s) g_{\Sigma}. $
This is different from the usual polar coordinate model,  and $s$ can be negative here. 

The Laplace operator is $$ \Delta = \tfrac{\partial^2}{\partial s^2 } + (n-1) \tfrac{\cs_K'(s)}{\cs_K(s)} \tfrac{\partial}{\partial s} + \tfrac{1}{\cs_K^2(s)} \Delta_{\Sigma}. $$
The ``one-dimensional" model of the eigenvalue equation $\Delta \phi = -\lambda \phi$ (when $\phi$ only depends on $s$) is
\begin{equation}\label{onedimmodel}
\phi''-(n-1)\tn_K(s)\phi' + \lambda \phi = 0.
\end{equation}
With the change of variable $\phi (s) = \cs_K^{-\frac{n-1}{2}}(s) \varphi(s)$,  we obtain the  Schr\"odinger normal form of \eqref{onedimmodel},
\begin{equation}\label{schrodingernormal}
\varphi''(s) - \tfrac{(n-1)K}{4} \left( \tfrac{n-3}{\cs_K^2(s)}  - (n-1) \right)  \varphi=  - \lambda \, \varphi.
\end{equation}
Hence the Dirichlet eigenvalues of \eqref{onedimmodel} are exactly  the same as the Dirichlet eigenvalues of \eqref{schrodingernormal}. 
Denote $\bar{\lambda}_1 (n,D,K), \ \bar{\lambda}_2 (n,D,K)$ their first and second Dirichlet eigenvalues on $[-\tfrac{D}{2},  \tfrac{D}{2}]$.  When $n=1,3$ or $K=0$, one can find the eigenvalues and eigenfunctions explicitly and the gap  $ \bar{\lambda}_2 (n,D,K) -\bar{\lambda}_1 (n,D,K) = 3 \frac{ \pi^2}{D^2}$.  In general one can not find the eigenvalues explicitly. 
When $K>0$,  as $(\cs_K^{-2}(s))'' \ge 0$, $ \bar{\lambda}_2 (n,D,K) -\bar{\lambda}_1 (n,D,K) > 3 \frac{ \pi^2}{D^2}$ when  $n > 3$,  but $<3 \frac{ \pi^2}{D^2}$ when  $n =2$ \cite{ashbaughbenguria89}.

First we note some easy bounds on these model eigenvalues.

\begin{proposition}
For $K>0$, we have 
\begin{equation*}
\bar{\lambda}_1 \leq \frac{\pi^2}{D^2}-\frac{(n-1)^2K}{4}+\frac{(n-1)(n-3)K}{D}\int_0^{D/2}\sec^2(\sqrt{K}x)\cos^2(\tfrac{\pi}{D}x),
\end{equation*}
while if $K>0$ and $n \ge 3$, one has
\begin{equation*}
\bar{\lambda}_1 \ge  \frac{\pi^2}{D^2} -\frac{(n-1)K}{2}.
\end{equation*}
Similarly for $\bar{\lambda}_2$, we have
 \begin{align*}
 \bar{\lambda}_2 &\leq \frac{4\pi^2}{D^2}-\frac{(n-1)^2K}{4}+\frac{(n-1)(n-3)K}{D}\int_0^{D/2}\sec^2(\sqrt{K}x)\sin^2(\tfrac{2\pi}{D}x),
 \end{align*}
 whereas if $n \ge 3$, 
 \begin{equation*}
\bar{\lambda}_2 \ge  \frac{4\pi^2}{D^2}-\frac{(n-1)K}{2}. 	
\end{equation*}
For $n=2$, the upper bounds can be made more explicit, see (\ref{lambda_1n=2}), (\ref{lambda_2n=2}).
\end{proposition}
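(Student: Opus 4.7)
The strategy is to apply the variational (min--max) characterization of the Dirichlet eigenvalues of the Schr\"odinger operator $L\varphi = -\varphi'' + V(s)\varphi$ on $[-D/2,D/2]$, where
\[
V(s) = \tfrac{(n-1)K}{4}\Bigl(\tfrac{n-3}{\cs_K^2(s)} - (n-1)\Bigr),
\]
using explicit trial functions for the upper bounds and a pointwise lower bound on $V$ for the lower bounds.

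For the upper bounds I would take $\varphi_1(s)=\cos(\pi s/D)$ as trial function for $\bar\lambda_1$ and $\varphi_2(s) = \sin(2\pi s/D)$ for $\bar\lambda_2$; both vanish at $\pm D/2$. Since $V$ is even in $s$ and the first eigenfunction $\bar\phi_1$ of $L$ must be even (it is a positive simple eigenfunction on a symmetric interval with an even potential), $\varphi_2$ is automatically $L^2$-orthogonal to $\bar\phi_1$, so the Rayleigh quotient of $\varphi_2$ controls $\bar\lambda_2$. A routine calculation gives $\int (\varphi_i')^2 / \int \varphi_i^2 = i^2\pi^2/D^2$; the constant $-\tfrac{(n-1)^2K}{4}$ piece of $V$ passes through the quotient as itself; and the $\cs_K^{-2}$ piece produces the stated integrals after restricting to $[0,D/2]$ by symmetry.

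For the lower bounds, assume $n \ge 3$. Then the coefficient $\tfrac{(n-1)(n-3)K}{4}$ of $\cs_K^{-2}(s)$ in $V$ is non-negative, and since $\cs_K^{-2}(s) \ge 1$ with equality only at $s=0$, the function $V$ attains its global minimum at $s=0$ with value $V(0) = -\tfrac{(n-1)K}{2}$. The quadratic form inequality $L \ge -d^2/ds^2 - \tfrac{(n-1)K}{2}$ together with the min--max principle then yields $\bar\lambda_k \ge \mu_k - \tfrac{(n-1)K}{2}$, where $\mu_k = k^2\pi^2/D^2$ is the $k$-th Dirichlet eigenvalue of $-d^2/ds^2$ on $[-D/2, D/2]$; this is precisely the claim for $k=1,2$.

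The only genuine obstacle is that the lower bound argument breaks down when $n=2$: there $(n-1)(n-3) = -1$, so $V$ is unbounded below near the endpoints as $D$ approaches $\pi/\sqrt{K}$ (and attains its \emph{maximum}, not minimum, at $s=0$), and no uniform pointwise lower bound on $V$ gives anything useful. This is why the proposition restricts the lower bounds to $n \ge 3$. The upper bound, by contrast, goes through verbatim for $n=2$, and in that case the integrals $\int_0^{D/2}\sec^2(\sqrt{K}x)\cos^2(\pi x/D)\,dx$ and $\int_0^{D/2}\sec^2(\sqrt{K}x)\sin^2(2\pi x/D)\,dx$ can be evaluated in closed form by expanding the trigonometric squares into $1\pm\cos$ pieces and integrating each against $\sec^2$ by parts with antiderivative $\tfrac{1}{\sqrt{K}}\tan(\sqrt{K}x)$, yielding the explicit formulas \eqref{lambda_1n=2} and \eqref{lambda_2n=2}.
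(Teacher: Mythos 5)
Your argument for the four displayed inequalities is correct and is essentially the paper's own proof: the same Rayleigh--quotient/min--max setup for the Schr\"odinger form, the same trial functions $\cos(\pi s/D)$ and $\sin(2\pi s/D)$ (with the same evenness argument for orthogonality to $\bar\phi_1$), and the same pointwise bound $\cs_K^{-2}(s)\ge 1$ for the lower bounds when $n\ge 3$. The one inaccuracy is your final sentence: the integrals $\int_0^{D/2}\sec^2(\sqrt{K}x)\cos^2(\pi x/D)\,dx$ and $\int_0^{D/2}\sec^2(\sqrt{K}x)\sin^2(2\pi x/D)\,dx$ are generally \emph{not} elementary (after integrating by parts you are left with $\int \tan(\sqrt{K}x)\sin(2\pi x/D)\,dx$, i.e.\ $\int \cos(\alpha x)/\cos(\beta x)\,dx$ with incommensurable frequencies); the paper instead obtains \eqref{lambda_1n=2} and \eqref{lambda_2n=2} by bounding $\sec^2(t)$ from below by its Taylor polynomial (whose coefficients are all positive, so truncation gives a lower bound, which for $n=2$ yields an upper bound on $\bar\lambda_i$ since $(n-1)(n-3)<0$) and then integrating polynomial-times-trigonometric terms, which is elementary. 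This affects only the pointer to the explicit $n=2$ formulas, not the proposition's main inequalities.
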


\begin{proof}
For $K>0$, as
	\begin{align*}
\bar{\lambda}_1 = \inf_{f \in C_0([-D/2,D/2])}\frac{\int_{-D/2}^{D/2}(f')^2}{\int_{-D/2}^{D/2}f^2}+\frac{(n-1)(n-3)K}{4}\frac{\int_{-D/2}^{D/2}\sec^2(\sqrt{K}x)f^2}{\int_{-D/2}^{D/2}f^2}-\frac{(n-1)^2K}{4},
	\end{align*}
	and $\sec^2(\sqrt{K}x) \ge 1$, we have, for $n \ge 3$, 
	\begin{equation}
\bar{\lambda}_1 \ge  \frac{\pi^2}{D^2}-\frac{(n-1)K}{2}.  \label{lambda_1-lb}	
	\end{equation}
For an upper bound, let $f = \cos(\tfrac{\pi}{D}x)$, we have  
\begin{align*}
\bar{\lambda}_1 &\leq \frac{\pi^2}{D^2}-\frac{(n-1)^2K}{4}+\frac{(n-1)(n-3)K}{D}\int_0^{D/2}\sec^2(\sqrt{K}x)\cos^2(\tfrac{\pi}{D}x).
\end{align*}
When $n=2$, we can get the following explicit upper bound by using $\sec^2(t) \geq 1 + t^2 + \frac{2t^4}{3}$, 
\begin{align}
\bar{\lambda}_1 \leq \frac{\pi^2}{D^2}-\frac{K}{2} -\frac{(\pi^2-6)D^2K^2}{48\pi^2}-\frac{(120-20\pi^2+\pi^4)D^4K^3}{480\pi^4}-\frac{17(\pi^6-42\pi^4+840\pi^2-5040)D^6K^4}{80640\pi^6}.  \label{lambda_1n=2}
\end{align}
Similarly for $\bar{\lambda}_2$, we have 
	\begin{equation*}
\bar{\lambda}_2 \ge  \frac{4\pi^2}{D^2}-\frac{(n-1)K}{2}. 	
\end{equation*}
For an upper bound we can use $f=\sin(\tfrac{2\pi}{D}x)$ as a test function since the first eigenfunction of the model is even, and we get
 \begin{align*}
 \bar{\lambda}_2 &\leq \frac{4\pi^2}{D^2}-\frac{(n-1)^2K}{4}+\frac{(n-1)(n-3)K}{D}\int_0^{D/2}\sec^2(\sqrt{K}x)\sin^2(\tfrac{2\pi}{D}x).
 \end{align*}
 When $n=2$,
\begin{align} 
\bar\lambda_2 &\leq \frac{4\pi^2}{D^2}-\frac{K}{2}- \frac{(\pi^2-\tfrac 32 )D^2K^2}{48\pi^2}-\frac{(\frac{15}{2}-5\pi^2+\pi^4)D^4K^3}{480\pi^4} - \frac{17(4\pi^6-42\pi^4+210\pi^2-315)D^6K^4}{322560}.
\label{lambda_2n=2}
\end{align}
\end{proof}
Obtaining explicit lower bounds for $\bar{\lambda}_1$ and  $\bar{\lambda}_2$ up to second order of $K$ is surprisingly hard.  Here we compute the asymptotic expansion of the eigenvalues $\bar{\lambda}_1 (n,D,K), \ \bar{\lambda}_2 (n,D,K)$ in terms of powers of the curvature $K$, proving  Proposition~\ref{eigen-asym} which we state here again for convenience. 
 
\begin{proposition}
For $K\in \mathbb{R}$, let $\kappa = KD^2$.  Then
\begin{equation*}
D^2\bar\lambda_1 = \pi^2-\frac{(n-1)}{2}\kappa+\frac{(n-1)(n-3)}{48\pi^2}(\pi^2-6)\kappa^2+\frac{(n-1)(n-3)}{480\pi^4}(\pi^4-20\pi^2+120)\kappa^3 +O(\kappa^4).
\end{equation*}
and
\begin{equation*}
D^2\bar\lambda_2 = 4\pi^2-\frac{(n-1)}{2}\kappa+\frac{(n-1)(n-3)}{48\pi^2}\left(\pi^2-\frac{3}{2}\right)\kappa^2+\frac{(n-1)(n-3)}{480\pi^4}\left(\pi^4-5\pi^2+\frac{15}{2}\right)\kappa^3 +O(\kappa^4).
\end{equation*}
Hence \[
D^2(\bar{\lambda}_2 -\bar{\lambda}_1 ) =   3  \pi^2 + \frac{3(n-1)(n-3)}{32 \pi^2}  \kappa^2 + \frac{(n-1)(n-3)}{480\pi^4}\left(15\pi^2-\frac{225}{2}\right)\kappa^3 +O(\kappa^4).	\]	
\end{proposition}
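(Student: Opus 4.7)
The plan is to treat the Schr\"odinger-normal-form problem \eqref{schrodingernormal} as a real-analytic perturbation of the flat case at $K=0$. First observe that the potential splits cleanly as
\[
\tfrac{(n-1)K}{4}\Bigl(\tfrac{n-3}{\cs_K^2(s)}-(n-1)\Bigr) \;=\; -\tfrac{(n-1)K}{2} \;+\; W_K(s),\qquad W_K(s) := \tfrac{(n-1)(n-3)K}{4}\bigl(\cs_K^{-2}(s)-1\bigr).
\]
Absorbing the constant $-\tfrac{(n-1)K}{2}$ into the eigenvalue by setting $\mu := \bar\lambda + \tfrac{(n-1)K}{2}$ recasts the problem as $-\varphi''+W_K\varphi = \mu\varphi$ on $[-D/2,D/2]$ with Dirichlet boundary, an eigenvalue problem for the operator family $H(K) = H_0 + W_K$, $H_0 := -\partial_s^2$. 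The unperturbed eigenvalues are $\mu_j^{(0)} = j^2\pi^2/D^2$ with $L^2$-normalized eigenfunctions $\varphi_1^{(0)}(s) = \sqrt{2/D}\cos(\pi s/D)$ and $\varphi_2^{(0)}(s) = \sqrt{2/D}\sin(2\pi s/D)$.

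Next I would expand $W_K$ in $K$ using $\cs_K^{-2}(s)-1 = Ks^2 + \tfrac{2}{3}K^2 s^4 + \tfrac{17}{45}K^3 s^6 + O(K^4 s^8)$, valid for either sign of $K$ by analytic continuation, to obtain
\[
W_K \;=\; K^2 W_2 + K^3 W_3 + O(K^4),\qquad W_2(s) = \tfrac{(n-1)(n-3)}{4}s^2,\quad W_3(s) = \tfrac{(n-1)(n-3)}{6}s^4.
\]
Since $W_K$ depends analytically on $K$ and the unperturbed eigenvalues are simple, Kato-Rellich theory guarantees that $\mu_j(K) = \mu_j^{(0)} + K\mu_j^{(1)} + K^2\mu_j^{(2)} + \cdots$ is analytic at $K=0$. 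The vanishing of $W_K$ to order $K^2$ forces $\mu_j^{(1)}=0$ and defers any second-order resolvent corrections to $O(K^4)$, so for $m\in\{2,3\}$ one has the clean first-order Rayleigh-Schr\"odinger formula
\[
\mu_j^{(m)} \;=\; \bigl\langle \varphi_j^{(0)}, W_m \varphi_j^{(0)}\bigr\rangle.
\]

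The remaining step is to evaluate the four moments $I_{j,m} := \int_{-D/2}^{D/2} s^{2m-2}(\varphi_j^{(0)})^2\, ds$ for $(j,m)\in\{1,2\}\times\{2,3\}$. Using $\cos^2(\pi s/D) = \tfrac12(1+\cos(2\pi s/D))$ and $\sin^2(2\pi s/D) = \tfrac12(1-\cos(4\pi s/D))$, each integral decouples into an elementary polynomial piece and an oscillatory piece handled by repeated integration by parts. The outputs
\[
I_{1,2} = \tfrac{(\pi^2-6)D^2}{12\pi^2},\quad I_{1,3} = \tfrac{(\pi^4-20\pi^2+120)D^4}{80\pi^4},
\]
together with the analogous values at $j=2$ (replacing $\pi$ by $2\pi$ in the oscillatory frequency), when inserted into $\bar\lambda_j = \mu_j^{(0)} - \tfrac{(n-1)K}{2} + K^2\tfrac{(n-1)(n-3)}{4}I_{j,2} + K^3\tfrac{(n-1)(n-3)}{6}I_{j,3} + O(K^4)$, reproduce the stated coefficients. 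Subtracting the two expansions yields the claimed gap formula.

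The main obstacle is mechanical rather than conceptual: the $I_{j,3}$ integrals involve four rounds of integration by parts and delicate cancellations between the polynomial and the oscillatory pieces, so care is needed to keep the numerical coefficients straight. The analytic perturbation framework itself is entirely standard, and the explicit variational bounds already recorded in this section (in particular $\bar\lambda_j \geq j^2\pi^2/D^2 - \tfrac{(n-1)K}{2}$ for $n\geq 3$, together with the $n=2$ upper bounds \eqref{lambda_1n=2}, \eqref{lambda_2n=2}) furnish a direct sanity check on the leading coefficients of the expansion.
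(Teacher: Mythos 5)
Your argument is correct and lands on exactly the right coefficients (I checked all four moments: $I_{1,2}=\tfrac{(\pi^2-6)D^2}{12\pi^2}$, $I_{1,3}=\tfrac{(\pi^4-20\pi^2+120)D^4}{80\pi^4}$, and their $j=2$ analogues give $\pi^2-\tfrac32$ and $\pi^4-5\pi^2+\tfrac{15}{2}$ as needed). It is the same underlying strategy as the paper---perturb about $K=0$ after absorbing the constant part of the potential into the eigenvalue---but the mechanics are genuinely different. The paper solves the hierarchy of inhomogeneous ODEs $y_{1,m}''+y_{1,m}=(\cdots)\cos x$ order by order, positing explicit particular solutions ($Ax\sin x$, then $Ax^2\cos x+(Bx^3+Cx)\sin x$, etc.) and extracting $\lambda_{1,K^m}$ from the Dirichlet boundary condition; you instead invoke the first-order Rayleigh--Schr\"odinger formula and observe that, because the non-constant part of the potential is $W_K=K^2W_2+K^3W_3+O(K^4)$, the first eigenfunction correction vanishes and both the $K^2$ and $K^3$ eigenvalue coefficients are pure diagonal matrix elements. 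This is the cleaner route for the orders covered by the Proposition: it replaces the construction of $y_{1,1},y_{1,2},y_{1,3}$ by four elementary moment integrals, and your splitting $-\tfrac{(n-1)K}{2}+W_K$ (which combines the paper's shift by $\tfrac{(n-1)^2K}{4}$ with the constant $O(K)$ term of the potential, whence $\mu_j^{(1)}=0$ automatically) is a tidy repackaging of the paper's first-order step. Two caveats. First, your shortcut is specific to orders $\le 3$: at order $K^4$ the second-order resolvent term $\langle\varphi^{(0)},W_2(H_0-\mu^{(0)})^{-1}P^\perp W_2\varphi^{(0)}\rangle$ enters, which is why the paper's explicit eigenfunction corrections $y_{1,2},y_{1,3}$ are needed for the higher-order computations in \S 5.1; so the paper's heavier machinery is not wasted. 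Second, to make ``$O(K^4)$'' rigorous you should note that $s\mapsto\cs_K^{-2}(s)-1$ is, for $|K|D^2<\pi^2$, a bounded-operator-valued function of $K$ that is analytic at $K=0$ (your analytic-continuation remark covers $K<0$ via $\sech^2$), so Kato--Rellich does apply to the simple eigenvalues $\mu_1,\mu_2$; the paper leaves this point implicit as well.
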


\begin{proof}
We shift the eigenvalue by $\frac{(n-1)^2}{4}K$ and perturb about $K=0$.  First set $D=\pi$.  Then the $K=0$ solution is given by $ \cos(x)$.  Set
\begin{align*}
y &= \cos(x) + Ky_{1,1}+K^2y_{1,2} +K^3y_{1,3},\\
\tilde\lambda_1 &= 1 + K\lambda_{1,K}+K^2\lambda_{1,K^2} +K^3\lambda_{1,K^3},
\end{align*}
where $\tilde\lambda_1$ is the shifted first eigenvalue.  Expanding $\sec^2(\sqrt{K}x) = 1 +Kx^2 + \frac{2}{3}K^2x^4 +\cdots $ and plugging in our expansion solutions, the first order equation in $K$ is given by
\begin{equation*}
y_{1,1}'' + y_{1,1} = \left(\frac{(n-1)(n-3)}{4}-\lambda_{1,K}\right)\cos(x).
\end{equation*}
Using the fact that the first eigenfunction is even about $x=0$, the particular solution is of the form $Ax\sin(x)$.  Plugging this in and using the Dirichlet boundary condition leads to
\begin{align*}
\lambda_{1,K} = \frac{(n-1)(n-3)}{4}
\end{align*}
Using the expansion again and plugging in for $\lambda_{1,K}$, the $K^2$ order equation is
\begin{align*}
y_{1,2}''+y_{1,2} -\left(\frac{(n-1)(n-3)}{4}x^2-\lambda_{1,K^2}\right)\cos(x) = 0.
\end{align*}
Using the fact that the solution is even, the particular solution is of the form $y_p = Ax^2\cos(x) + (Bx^3+Cx)\sin(x)$.  Plugging this in and using the Dirichlet condition again gives us
\begin{equation*}
\lambda_{1,K^2} = \frac{(n-1)(n-3)}{24}\left(\frac{\pi^2}{2}-3\right).
\end{equation*}
The $K^3$ equation is 
\begin{align*}
y_{1,3}''+y_{1,3} = \frac{(n-1)(n-3)}{6}x^4\cos(x)- \lambda_{1,K^3}\cos(x).
\end{align*}
Similar computations give
\begin{equation*}
\lambda_{1,K^3} = \left(\pi^4-20\pi^2+120\right)\frac{(n-1)(n-3)}{480}.
\end{equation*}
Combining these and shifting by $\frac{(n-1)^2K}{4}$, we get
\begin{equation*}
\bar{\lambda}_1 = 1-\frac{(n-1)}{2}K+\frac{(n-1)(n-3)}{48}(\pi^2 - 6)K^2 + \left(\pi^4-20\pi^2+120\right)\frac{(n-1)(n-3)}{480}K^3 +O(K^4).
\end{equation*}
By rescaling, we obtain
\begin{equation*}
\bar\lambda_1 = \frac{\pi^2}{D^2}-\frac{(n-1)}{2}K+\frac{(n-1)(n-3)}{48}(\pi^2 - 6)\frac{D^2}{\pi^2}K^2 +\frac{(n-1)(n-3)}{480}\frac{D^4}{\pi^4}K^3(\pi^4-20\pi^2+120) +O(K^4).
\end{equation*}
To compute the asymptotics of the second eigenvalue, we repeat the steps above and instead we use the second eigenfunction solution for the $K=0$ case so that
\begin{align*}
y &= \sin(2x) + Ky_{2,1}+K^2y_{2,2} +K^3y_{2,3},\\
\tilde\lambda_2 &= 4+K\lambda_{2,K}+K^2\lambda_{2,K^2}+K^3\lambda_{2,K^3},
\end{align*}
where again, $\tilde\lambda_2$ is the shifted eigenvalue. 
\end{proof}

\subsection{Higher Order terms} \label{high-order}
\subsubsection{Fourth order term}
Beginning with the fourth order term, the sign of the coefficient changes for some $n >3$ instead of at $n=3$.   We compute for $D=\pi$.  Expanding out the equation and collecting the $K^4$ terms, we have
\begin{align*}
y_{1,4}''+y_{1,4}+\lambda_{1,K^4}\cos(x)=\left(\frac{(n-1)(n-3)}{4}x^2-\lambda_{1,K^2}\right)y_{1,2}+\frac{17(n-1)(n-3)}{180}x^6\cos(x).
\end{align*}
Multiplying by $\cos(x)$, integrating from $-\frac{\pi}{2}$ to $\frac{\pi}{2}$ and using the second order equation, we get
\begin{align*}
\lambda_{1,K^4} &= \frac{2}{\pi}\left(\frac{17(n-1)(n-3)}{180}\int_{-\pi/2}^{\pi/2}x^6\cos^2(x)dx-\int_{-\pi/2}^{\pi/2}(y'_{1,2})^2dx +\int_{-\pi/2}^{\pi/2}y_{1,2}^2dx\right).
\end{align*}
From the computation of the second order term, we have the second order of the first eigenfunction
\begin{align*}
y_{1,2} = \frac{(n-1)(n-3)}{24}\left(x^3\sin(x)+\frac{3}{2}x^2\cos(x)-\frac{\pi^2}{4}x\sin(x)\right).
\end{align*}
Using this, we get
\begin{align*}
\lambda_{1,K^4} = \frac{(n-1)^2(n-3)^2}{24^2}\left(\frac{\pi^4-75\pi^2+630}{20}\right)+\frac{(n-1)(n-3)}{24}\left(\frac{17(\pi^6-42\pi^4+840\pi^2-5040)}{3360}\right).
\end{align*}
Note that
\begin{align*}
\frac{\pi^4-75\pi^2+630}{20} \approx -0.64 \end{align*}
\begin{align*}\frac{17(\pi^6-42\pi^4+840\pi^2-5040)}{3360}\approx 0.61.
\end{align*}
Similar computations yield
\begin{align*}
\lambda_{2,K^4} =  \frac{(n-1)^2(n-3)^2}{24^2}\left(\frac{8\pi^4-150\pi^2+315}{640}\right)+\frac{(n-1)(n-3)}{24}\left(\frac{17(4\pi^6-42\pi^4+210\pi^2-315)}{13440}\right).
\end{align*}
Note that
\begin{align*}
\frac{8\pi^4-150\pi^2+315}{640} \approx  -0.603
\end{align*}
\begin{align*}
\frac{17(4\pi^6-42\pi^4+210\pi^2-315)}{13440} \approx 1.912
\end{align*}
and the gap is
\begin{align*}
\lambda_{2,K^4}-\lambda_{1,K^4} = \frac{(n-1)^2(n-3)^2}{24^2}\left(\frac{3(750\pi^2-8\pi^4-6615)}{640}\right) + \frac{(n-1)(n-3)}{24}\left(\frac{51(2\pi^4-50\pi^2+315)}{640}\right).
\end{align*}
Note that
\begin{align*}
\left(\frac{51(2\pi^4-50\pi^2+315)}{640}\right) \approx 1.301
\end{align*}
and
\begin{align*}
\frac{3(750\pi^2-8\pi^4-6615)}{640} \approx 0.037.
\end{align*}

\subsubsection{Fifth order term}
To compute the fifth order term, we need the third order eigenfunctions.
\begin{align*}
y_{1,3} = \frac{(n-1)(n-3)}{24}\left((x^4-3x^2)\cos(x) +\left(\frac{2}{5}x^5-2x^3-\left(\frac{\pi^4}{40}-\frac{\pi^2}{2}\right)x\right)\sin(x)\right)
\end{align*}
and
\begin{align*}
y_{2,3} &= -\frac{(n-1)(n-3)}{120}\left(\left(x^5-\frac{5}{4}x^3+\left(\frac{5\pi^2}{16}-\frac{\pi^4}{16}\right)x\right)\cos(2x)-\left(\frac{5}{4}x^4-\frac{15}{16}x^2\right)\sin(2x)\right).
\end{align*}
Then
\begin{align*}
\int_{-\pi/2}^{\pi/2}y_{1,2}y_{1,3} &= \frac{(n-1)^2(n-3)^2}{24^2}\frac{1}{160}\left(-15570\pi+2220\pi^3-67\pi^5+\frac{13\pi^7}{168}+\frac{4\pi^9}{315}\right)\\
& \approx \frac{(n-1)^2(n-3)^2}{24^2}0.1766
\end{align*}
and
\begin{align*}
\int_{-\pi/2}^{\pi/2} (y_{1,2}')(y_{1,3}') &= \frac{(n-1)^2(n-3)^2}{24^2} \frac{1}{160}\left(1710\pi - 300\pi^3+17\pi^5-\frac{83\pi^7}{168}+\frac{4\pi^9}{315}\right)\\
& \approx\frac{(n-1)^2(n-3)^2}{24^2} 0.993 
\end{align*}
and
\begin{align*}
\frac{62}{315}\int_{-\pi/2}^{\pi/2}x^8\cos^2(x) = \frac{62}{315}\frac{\pi(362880-60480\pi^2+3024\pi^4-72\pi^6+\pi^8)}{4608} \approx 0.10734.
\end{align*}
Combining these together, we have
\begin{align*}
\lambda_{1,5} &= \frac{2}{\pi}\left(2\int y_{1,2}y_{1,3} - 2\int (y_{1,2}')(y_{1,3}')+\frac{(n-1)(n-3)}{4}\frac{62}{315}\int x^8\cos^2(x)  \right)\\
&= \frac{(n-1)^2(n-3)^2}{24^2}\frac{(-30240+4410\pi^2-147\pi^4+\pi^6)}{70} +\frac{(n-1)(n-3)}{2\pi}\frac{62}{315}\int x^8\cos^2(x)\\
&\approx \frac{(n-1)^2(n-3)^2}{24^2}(-1.039)  + \frac{(n-1)(n-3)}{2\pi}(0.10734).
\end{align*}

For the second eigenvalue,
\begin{align*}
\int_{-\pi/2}^{\pi/2}y_{2,2}y_{2,3} &= \frac{(n-1)^2(n-3)^2}{2\times 48^2}\frac{1}{80}\left(-\frac{7785\pi}{128}+\frac{555\pi^3}{16}-\frac{67\pi^5}{16}+\frac{13\pi^7}{672}+\frac{4\pi^9}{315}\right) \\
&\approx \frac{(n-1)^2(n-3)^2}{ 48^2}0.249
\end{align*}
and
\begin{align*}
\int_{-\pi/2}^{\pi/2}(y_{2,2}')(y_{2,3}') &= \frac{(n-1)^2(n-3)^2}{48^2}\frac{1}{160}\left(\frac{855\pi}{32}-\frac{75\pi^3}{4}+\frac{17\pi^5}{4}-\frac{83\pi^7}{168}+\frac{16\pi^9}{315}\right)\\
&\frac{(n-1)^2(n-3)^2}{48^2}\approx 5.157
\end{align*}
and
\begin{align*}
\frac{(n-1)(n-3)}{4}\frac{62}{315}\int_{-\pi/2}^{\pi/2} x^8\sin^2(2x) &= \frac{(n-1)(n-3)}{4}\frac{31\pi(2835-1890\pi^2+378\pi^4-36\pi^6+2\pi^8)}{1451520}\\
&\approx \frac{(n-1)(n-3)}{4}0.36024.
\end{align*}
Combining these together,
\begin{align*}
\lambda_{2,5} &= \frac{2}{\pi}\left(2\int y_{2,2}y_{2,3} - 2\int(y'_{2,2})(y'_{2,3}) + \frac{(n-1)(n-3)}{4}\frac{62}{315}\int_{-\pi/2}^{\pi/2}x^8\sin^2(2x)\right)\\
&=\frac{(n-1)^2(n-3)^2}{48^2}\left( -\frac{2241}{1024}+\frac{171\pi^2}{128}-\frac{27\pi^4}{128}+\frac{23\pi^6}{1792}-\frac{\pi^8}{1050} \right) + \frac{(n-1)(n-3)}{2\pi}\frac{62}{315}\int_{-\pi/2}^{\pi/2}x^8\sin^2(2x)\\
&\approx \frac{(n-1)^2(n-3)^2}{24^2}(-1.561)+ \frac{(n-1)(n-3)}{2\pi}(0.35024)
\end{align*}
so that
\begin{align*}
\lambda_{2,5}-\lambda_{1,5} \approx \frac{(n-1)^2(n-3)^2}{24^2}(-0.522)+ \frac{(n-1)(n-3)}{2\pi}(0.2429).
\end{align*}
\begin{remark}
Here we see that the sign of the coefficient of the gap changes for some large $n$.
\end{remark}

\subsection{Formula for general order}
In general,
\begin{align*}
\sum_{n=0}K^ny_{1,n}''-\frac{(n-1)(n-3)}{4}K\left(\sum_{n=0}a_nK^nx^{2n}\right)\left( \sum_{n=0}K^ny_{1,n} \right)=-\left(\sum_{n=0}K^n\lambda_n \right)\left( \sum_{n=0}K^ny_{1,n} \right),
\end{align*}
where $a_i$ are the coefficients in the series expansion of $\sec^2(x)$ and $y_{1,j}$ are the $j$-th order functions of the first eigenfunction.  Grouping the $K^m$ term, the equation becomes
\begin{align*}
y_{1,m}''+y_{1,m}+\lambda_{1,m}\cos(x)=\frac{(n-1)(n-3)}{4}\sum_{i+j=m-1}a_{i}x^{2i}y_{1,j}-\sum_{\substack{i+j=m \\ i,j< m}}\lambda_iy_{1,j}.
\end{align*}
Multiplying by $\cos(x)$ and integrating to isolate $\lambda_m$, noting that multiplying by the zero-th order eigenfunction and integrating will zero out the $m$-th order eigenfunctions.  We have
\begin{align*}
\lambda_{1,m}\int_{-\pi/2}^{\pi/2}\cos^2(x) = \frac{(n-1)(n-3)}{4}\sum_{i+j=m-1}a_i\int_{-\pi/2}^{\pi/2} x^{2i}y_{1,j}\cos(x)-\sum_{\substack{i+j=m \\ i,j <m}}\lambda_{1,i}\int_{-\pi/2}^{\pi/2} y_{1,j}\cos(x)
\end{align*}
Collecting the $j$-th terms, we get
\begin{align*}
\lambda_{1,m} = \frac{2}{\pi}\sum_{j=1}^{m-1}\int_{-\pi/2}^{\pi/2} \left(\frac{(n-1)(n-3)}{4}a_{m-j-1}x^{2(m-j-1)}-\lambda_{1,m-j}\right)\cos(x)y_{1,j}.
\end{align*}

Finally, we end with some remark about the modulus of convexity model used here compared to that used in the sphere model.
\begin{remark} \label{two-mod} There is no direct comparison between the modulus of the two models. 

Let 
\begin{align*}
f(s)=-\frac{\pi}{D}\tan(\tfrac{\pi}{D}s) + \frac{(n-1)}{2}\tn_K(s).
\end{align*}
and
\begin{align*}
\psi(x) = (\log\phi(x))',
\end{align*}
where $\phi$ satisfies
\begin{align*}
\phi''(x) -(n-1)\tn_K(x)\phi'(x) +\bar\lambda_1\phi(x) = 0.
\end{align*}
Then
\begin{align*}
\psi'(x)&= - \psi^2+(n-1)\tn_K(x)\psi(x)-\bar\lambda_1,
\end{align*}
and
\begin{align*}
f' &=-f^2+(n-1)\tn_K(s)f-\frac{\pi^2}{D^2}+\frac{(n-1)K}{2}-\frac{(n-1)(n-3)}{4}\tn_K^2(s).
\end{align*}
When $n=3$, we have $\psi =f$. In general, from (\ref{lambda_1-lb}) and (\ref{lambda_1n=2}), $\bar\lambda_1 > \frac{\pi^2}{D^2} -\frac{(n-1)K}{2}$ when $n> 3$, and $\bar\lambda_1 < \frac{\pi^2}{D^2} -\frac{K}{2}$ when $n=2$,  however the sign of the remaining term is in the opposite direction. Hence there is no direct comparison between $f$ and $\psi$. The asymptotic expansion is given by the following computation
\begin{align*}
\phi(x) &=\cs_K(x)^{-\frac{n-1}{2}}\cos(\tfrac{\pi}{D}x)\left(1+A_nK^2\left(\frac{\pi}{D}x^3\tan(\tfrac{\pi}{D}x)+\frac{3}{2}x^2-\frac{D\pi}{4}x\tan(\tfrac{\pi}{D}x)\right) + O(K^3)\right),
\end{align*}
where $A_n = \frac{(n-1)(n-3)}{24}$. Using $\log(1+x) = x-\frac{x^2}{2}+\frac{x^3}{3}+O(x^4)$,
\begin{align*}
\log(\phi) &= \log(\cos(\tfrac{\pi}{D}x))-\frac{(n-1)}{2}\log\cs_K(x) +K^2\frac{(n-1)(n-3)}{24}\left(\frac{\pi}{D}x^3\tan(\tfrac{\pi}{D}x)+\frac{3}{2}x^2-\frac{D\pi}{4}x\tan(\tfrac{\pi}{D}x)\right)\\
&\hspace{0.2 in} +O(K^4)
\end{align*}
Hence the modulus for the sphere model is asymptotically
\begin{align*}
\psi(x) &= (\log\phi(x))'\\
&=-\frac{\pi}{D}\tan(\tfrac{\pi}{D}x)+\frac{(n-1)}{2}\tn_K(x)\\
&\hspace{0.2 in}+\frac{(n-1)(n-3)}{24}K^2\left(\tfrac{\pi^2}{D^2}x^3\sec^2(\tfrac{\pi}{D}x)+\tfrac{3\pi}{D}x^2\tan(\tfrac{\pi}{D}x)+3x-\tfrac{\pi^2}{4}x\sec^2(\tfrac{\pi}{D}x)-\tfrac{D\pi}{4}\tan(\tfrac{\pi}{D}x)  \right)+O(K^3)
\end{align*}
At $x=0$, the function part of the $K^2$ term is $0$ and decreasing.  Hence for small values of $x$, the term is negative and depending on the sign of $A_n$, gives a better modulus estimate than the Euclidean model.  However the term goes to infinity as it approaches $D/2$. Compare this to the expansion of the Euclidean model
\begin{align*}
f(x) = -\frac{\pi}{D}\tan(\tfrac{\pi}{D}x) +\frac{(n-1)}{2}\tn_K(x).
\end{align*}
\end{remark}

\begin{bibdiv}
\begin{biblist}

\bib{andrews-survey}{article}{
   author={Andrews, Ben},
   title={Moduli of continuity, isoperimetric profiles, and multi-point
   estimates in geometric heat equations},
   conference={
      title={Surveys in differential geometry 2014. Regularity and evolution
      of nonlinear equations},
   },
   book={
      series={Surv. Differ. Geom.},
      volume={19},
      publisher={Int. Press, Somerville, MA},
   },
   date={2015},
   pages={1--47},
   review={\MR{3381494}},
}

\bib{andrews-clutterbuck}{article}{
   author={Andrews, Ben},
   author={Clutterbuck, Julie},
   title={Proof of the fundamental gap conjecture},
   journal={J. Amer. Math. Soc.},
   volume={24},
   date={2011},
   number={3},
   pages={899--916},
   issn={0894-0347},
   review={\MR{2784332}},
}

\bib{andrews-clutterbuckeigenvalue}{article}{
   author={Andrews, Ben},
   author={Clutterbuck, Julie},
   title={Sharp modulus of continuity for parabolic equations on manifolds
   and lower bounds for the first eigenvalue},
   journal={Anal. PDE},
   volume={6},
   date={2013},
   number={5},
   pages={1013--1024},
   issn={2157-5045},
   review={\MR{3125548}},
}

\bib{ashbaughbenguria89}{article}{
	author={Ashbaugh, Mark S.},
	author={Benguria, Rafael D.},
	title={Optimal lower bound for the gap between the first two eigenvalues
		of one-dimensional Schr\"{o}dinger operators with symmetric single-well
		potentials},
	journal={Proc. Amer. Math. Soc.},
	volume={105},
	date={1989},
	number={2},
	pages={419--424},
	issn={0002-9939},
	review={\MR{942630}},
}

\bib{ashbaugh-benguria}{article}{
   author={Ashbaugh, Mark S.},
   author={Benguria, Rafael D.},
   title={A sharp bound for the ratio of the first two Dirichlet eigenvalues
   of a domain in a hemisphere of $S^n$},
   journal={Trans. Amer. Math. Soc.},
   volume={353},
   date={2001},
   number={3},
   pages={1055--1087},
   issn={0002-9947},
   review={\MR{1707696}},
   doi={10.1090/S0002-9947-00-02605-2},
}

\bib{ashbaugh-levine}{article}{
   author={Ashbaugh, Mark S.},
   author={Levine, Howard A.},
   title={Inequalities for the Dirichlet and Neumann eigenvalues of the
   Laplacian for domains on spheres},
   conference={
      title={Journ\'ees ``\'Equations aux D\'eriv\'ees Partielles''},
      address={Saint-Jean-de-Monts},
      date={1997},
   },
   book={
      publisher={\'Ecole Polytech., Palaiseau},
   },
   date={1997},
   pages={Exp.\ No.\ I, 15},
   review={\MR{1482268}},
}

\bib{he-wei}{article}{
	author={He, Chenxu},
	author={Wei, Guofang},
    title = {Fundamental Gap of Convex domains in the spheres -- with appendix by Qi S. Zhang},
   eprint = {arXiv:1705.11152},
     year = {2017}
}

\bib{hsu-wang}{article}{
   author={Hsu, Yi-Jung},
   author={Wang, Tai-Ho},
   title={Inequalities between Dirichlet and Neumann eigenvalues for domains
   in spheres},
   journal={Taiwanese J. Math.},
   volume={5},
   date={2001},
   number={4},
   pages={755--766},
   issn={1027-5487},
   review={\MR{1870045}},
   doi={10.11650/twjm/1500574993},
}

\bib{seto-wang-wei}{article}{
	author={Seto, Shoo},
	author={Wang, Lili},
	author={Wei, Guofang},
    title = {Sharp Fundamental Gap Estimate on Convex Domains of Sphere},
   eprint = {arXiv:1606.01212},
     year = {2016}
}

\bib{singerwongyauyau}{article}{
   author={Singer, I. M.},
   author={Wong, Bun},
   author={Yau, Shing-Tung},
   author={Yau, Stephen S.-T.},
   title={An estimate of the gap of the first two eigenvalues in the
   Schr\"odinger operator},
   journal={Ann. Scuola Norm. Sup. Pisa Cl. Sci. (4)},
   volume={12},
   date={1985},
   number={2},
   pages={319--333},
   issn={0391-173X},
   review={\MR{829055}},
}
\end{biblist}
\end{bibdiv}

\end{document}